\DeclareFontFamily{U}{wncy}{}
\DeclareFontShape{U}{wncy}{m}{n}{<->wncyr10}{}
\DeclareSymbolFont{mcy}{U}{wncy}{m}{n}
\DeclareMathSymbol{\Sh}{\mathord}{mcy}{"58} 
\newtheorem{theorem}{Theorem}[section] 
\newtheorem{lemma}[theorem]{Lemma}
\newtheorem{proposition}[theorem]{Proposition}
\theoremstyle{definition}
\newtheorem{remark}[theorem]{Remark}
\newtheorem*{rep@theorem}{\rep@title}
\newcommand{\newreptheorem}[2]{%
\newenvironment{rep#1}[1]{%
 \def\rep@title{#2 \ref{##1}}%
 \begin{rep@theorem}}%
 {\end{rep@theorem}}}
\newcommand{\Sel}{\mathrm{Sel}}
\newcommand{\rank}{\mathrm{rank}}
\newcommand{\Gal}{\mathrm{Gal}}
\newcommand{\cor}{\mathrm{cor}} 
\newcommand{\Tr}{\mathrm{Tr}} 
\newcommand{\Hom}{\mathrm{Hom}} 
\newcommand{\Frob}{\mathrm{Frob}}
\newcommand{\Ker}{\mathrm{Ker}}
\newcommand{\Aut}{\mathrm{Aut}}
\newcommand{\res}{\mathrm{res}}
\font\smallit=cmti10
\begin{document}

\begin{center}
\uppercase{\bf Kolyvagin's method for Chow groups \\ of Kuga-Sato varieties over ring class fields}
\vskip 20pt
{\bf Yara Elias\footnote{Supported by a doctoral scholarship of the Fonds Qu\'eb\'ecois de la Recherche sur la Nature et les Technologies.}}\\
{\smallit Department of Mathematics, McGill University}\\
{\tt yara.elias@mail.mcgill.ca}\\

\end{center}

\begin{abstract}
We use an Euler system of Heegner cycles to bound the Selmer group associated to a modular form of higher even weight twisted by a ring class field character. This is an extension of Nekovar's result \cite{nekovar1992kolyvagin} that uses Bertolini and Darmon's refinement of Kolyvagin's ideas, as described in \cite{bertolini1990descent}.
\end{abstract}

\textbf{Acknowledgments.} I am very grateful to my advisor Henri Darmon who suggested this beautiful problem and guided me through its resolution and whose feedback was crucial to the writing of this paper. 
I thank Olivier Fouquet, Eyal Goren for many useful corrections and suggestions.

\section{Introduction}

Kolyvagin \cite{kolyvagin1990grothendieck} and \cite{gross1991kolyvagin} used an Euler system to bound the size of the Selmer group of certain elliptic curves over imaginary quadratic fields assuming the non-vanishing of a suitable Heegner point.
This implied that they have rank 1, and that their associated Tate-Shafarevich group is finite.
Combined with results of Gross and Zagier \cite{gross1986heegner}, this proved the Birch and Swinnerton-Dyer conjecture for analytic rank at most 1.
Bertolini and Darmon later adapted Kolyvagin's descent to Mordell-Weil groups over ring class fields \cite{bertolini1990descent}.  
Nekovar \cite{nekovar1992kolyvagin} then applied the methods of Euler systems to modular forms of even weight larger than 2 to describe the image by the Abel-Jacobi map of certain algebraic cycles of the associated Kuga-Sato varieties. 
Combined with results of Gross-Zagier \cite{gross1986heegner} and Brylinski \cite{brylinski1989}, this provided further grounds to believe the Bloch-Beilinson conjecture which generalizes Birch and Swinnerton-Dyer's.
The present work adapts ideas and techniques from the aforementionned articles to bound the size of Selmer groups associated to modular forms of even weight larger than 2 twisted by ring class characters of imaginary quadratic fields. 

Let $f$ be a normalized newform of level $\Gamma_0(N)$ and trivial nebentype where $N \geq 5$ and of even weight $2r > 2$
and let $$K=\mathbb{Q}(\sqrt{-D})$$ be an imaginary quadratic field satisfying the \emph{Heegner hypothesis} relative to $N$, that is, rational primes dividing $N$ split in $K$. 
For simplicity, we assume that $|\mathcal{O}_K^{\times}|=2$.
We fix a prime $p$ not dividing $ND\phi(N).$
Let $H$ be the ring class field of $K$ of conductor $c$ with $(c,NDp)=1$, and let $e$ be the exponent of $\Gal(H/K)$.
Let $F=\mathbb{Q}(a_1, a_2, \cdots, \mu_e)$ be the field generated over $\mathbb{Q}$ by the coefficients of $f$ and the $e$-th roots of unity $\mu_e$.
We denote by $A$ the $p$-adic \'{e}tale realization of the  motive associated to $f$ by Scholl \cite{scholl1990motives} and Deligne \cite{deligne1973modular} twisted by $r$ (see Section \ref{motive} for more details).
It will be viewed (by extending scalars appropriately) as a free $\mathcal{O}_F \otimes \mathbb{Z}_p$ module of rank 2, equipped with a continuous $\mathcal{O}_F$-linear action of $\Gal(\overline{\mathbb{Q}}/\mathbb{Q})$. Let $A_{\wp}$ be the localization of $A$ at a prime $\wp$ of $\mathcal{O}_F$ dividing $p$ as in Expression \ref{localized A}. Then $A_{\wp}$ is a free module of rank 2 over $\mathcal{O}_{\wp},$ the completion of $\mathcal{O}_F$ at $\wp$. 
The Selmer group $$S \subseteq H^1(H, A_{\wp}/p)$$ consists of the cohomology classes $c$ whose localizations $c_v$ at a prime $v$ of $H$ lie in 
$$\left\{
    \begin{array}{ll}
         H^1(H^{ur}_{v}/H_{v},A_{\wp}/p) \hbox{ for } v \hbox{ not dividing }Np\\
         H^1_f(H_v, A_{\wp}/p ) \hbox{ for } v \hbox{ dividing }p
    \end{array}
\right.$$
where $H^1_f(H_v, A_{\wp}/p)$ is the \emph{finite part} of $H^1(H_v, A_{\wp}/p)$ as in \cite{block1990lfunction}.
In our setting, since $A_{\wp}$ has good reduction at $p$, $H^1_f(H_v, M)=H^1_{cris}(H_v, M).$
Note that the assumptions we make will ensure that $H^1(H^{ur}_{v}/H_{v},A_{\wp}/p)=0$ for $v$ dividing $N$. 
The Galois group $$G=\Gal(H/K)$$ acts on $H^1(H,A_{\wp}/p)$ and preserves the unramified and cristalline classes, hence it acts on $S$. Assume that $p$ does not divide $|G|$.
We denote by $\hat{G}=\mathrm{Hom}(G, \mu_e)$ the group of characters of $G$
and by $$e_{\chi}=\dfrac{1}{|G|}\sum_{g \in G} \chi^{-1}(g) g$$ the projector onto the $\chi$-eigenspace given a character $\chi$ of $\hat{G}$.

By the Heegner hypothesis, there is an ideal $\mathcal{N}$ of $\mathcal{O}_{c}$, the order of $K$ of conductor $c$, such that $$\mathcal{O}_{c}/\mathcal{N} = \mathbb{Z}/N \mathbb{Z}.$$ Therefore, $\mathbb{C}/\mathcal{O}_{c}$ and $\mathbb{C}/\mathcal{N}^{-1}$ define elliptic curves related by an $N$-isogeny. As points of $X_0(N)$ correspond to elliptic curves related by $N$-isogenies, this provides a \emph{Heegner point} $x_{1}$ of $X_0(N)$. By the theory of complex multiplication, $x_1$ is defined over $H$.
Let $E$ be the corresponding elliptic curve. Then $E$ has complex multiplication by $\mathcal{O}_{c}$.
The Heegner cycle of conductor $c$ is defined as $$e_r (\mathrm{graph}(\sqrt{-D}))^{r-1}$$ for some appropriate projector $e_r$, (see Section \ref{motive} for more details).  
Let $\delta$ be the image by the $p$-adic \'{e}tale Abel-Jacobi map of the Heegner cycle of conductor $c$ viewed as an element of $H^1(H,A_{\wp}/p)$. 
We denote by $Fr(v)$ the arithmetic Frobenius element generating $\Gal(H_v^{ur}/H_v),$ and by $$I_v=\Gal(\overline{H_v}/H_v^{ur}).$$
This article is dedicated to the proof of the following statement:
\begin{theorem} \label{theorem 1}
Assume that $p$ is such that $$ \Gal \left( \mathbb{Q}(A_{\wp}/p)/\mathbb{Q} \right) \simeq \mathrm{GL}_2(\mathcal{O}_{\wp}/p), \ \ \ \ (p,ND\phi(N))=1, \ \hbox{ and } \ p \nmid |G|.$$
Suppose further that the eigenvalues of $Fr(v)$ acting on $A_{\wp}^{I_v}$ are not equal to 1 modulo $p$ for $v$ dividing $N$. 
Let $\chi \in \hat{G}$ be such that $e_{\overline{\chi}} \delta$ is not divisible by $p$.
Then the $\chi$-eigenspace $S^{\chi}$ of the Selmer group  $S$ has rank 1 over $\mathcal{O}_{\wp}/p$.
\end{theorem}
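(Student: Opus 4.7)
The plan is to transplant Kolyvagin's Euler system argument, as refined for ring class fields by Bertolini--Darmon \cite{bertolini1990descent} and for higher-weight Kuga--Sato cycles by Nekov\'ar \cite{nekovar1992kolyvagin}, into the present combined setting. The first step is the construction of an Euler system of Heegner cohomology classes. I will call a rational prime $\ell \nmid cNDp$ a \emph{Kolyvagin prime} if $\ell$ is inert in $K$ and the arithmetic Frobenius $\Frob_\ell$ acts on $A_{\wp}/p$ with eigenvalues $\pm 1$; the latter is a congruence condition on $\ell$ and $a_\ell(f)$ that controls how $A_{\wp}/p$ splits as a direct sum of two Frobenius-stable free $\mathcal{O}_{\wp}/p$-lines. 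For any squarefree product $n$ of Kolyvagin primes, let $H_{cn}$ denote the ring class field of $K$ of conductor $cn$, and let $y_n \in H^1(H_{cn}, A_{\wp})$ be the image under the $p$-adic \'etale Abel--Jacobi map of the Heegner cycle of conductor $cn$. The theory of complex multiplication, combined with the Eichler--Shimura relation and a trace computation on the Kuga--Sato variety, yields the Euler system norm relation $\cor_{H_{cn\ell}/H_{cn}}\, y_{n\ell} = a_\ell \, y_n$ for every Kolyvagin prime $\ell \nmid n$.

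The second step is the Kolyvagin derivative. Fix a generator $\sigma_\ell$ of the cyclic group $\Gal(H_{cn\ell}/H_{cn})$ of order $\ell+1$, set $D_\ell = \sum_{i=1}^{\ell} i\sigma_\ell^i$, and $D_n = \prod_{\ell \mid n} D_\ell$. The telescoping identity $(\sigma_\ell - 1)D_\ell = \ell + 1 - N_\ell$, where $N_\ell$ is the norm operator, combined with the Euler system norm relation above, implies that the reduction modulo $p$ of $D_n y_n$ is $\Gal(H_{cn}/H)$-invariant; since $p \nmid |G|$ and the auxiliary layer $\Gal(H_{cn}/H_c)$ has order prime to $p$, inflation--restriction produces a class $\kappa(n) \in H^1(H, A_{\wp}/p)$ with $\kappa(1) = \delta$. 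A standard local analysis shows that $\kappa(n)$ is unramified at all places $v \nmid np$ of $H$ and cristalline at places above $p$. At a place $v \mid \ell$ with $\ell \mid n$, the local cohomology $H^1(H_v, A_{\wp}/p)$ splits as the direct sum of its finite (unramified) and singular parts, each free of rank one over $\mathcal{O}_{\wp}/p$. The crucial \emph{reciprocity law} asserts that the singular component of $\kappa(n)_v$ is identified, through an explicit isomorphism of the singular and finite summands, with the finite component of $\kappa(n/\ell)_v$. This identification, which in weight two goes back to Kolyvagin and in higher weight is due to Nekov\'ar, is the step I expect to be the main technical obstacle: it requires a careful description of the reduction of the Heegner cycle on the Kuga--Sato variety modulo primes above $\ell$ and a comparison of the local Galois representation on $A_{\wp}$ with the cohomology of the supersingular reduction, generalizing the elliptic curve computation.

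Granting the reciprocity law, the theorem follows from a Chebotarev-plus-global-duality argument in the $\chi$-eigenspace. The lower bound $\rank_{\mathcal{O}_{\wp}/p} S^{\chi} \geq 1$ is supplied by $e_{\chi}\delta$: complex conjugation $\tau$, acting on $G$ by inversion, interchanges the $\chi$- and $\overline{\chi}$-eigenspaces of $H^1(H, A_{\wp}/p)$, and since $\delta$ is an eigenvector for $\tau$ up to sign, the indivisibility of $e_{\overline{\chi}}\delta$ forces that of $e_{\chi}\delta$. For the upper bound, suppose $S^{\chi}$ has $\mathcal{O}_{\wp}/p$-rank at least two, and pick $s \in S^{\chi}$ linearly independent from $e_{\chi}\delta$. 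The big-image hypothesis $\Gal(\mathbb{Q}(A_{\wp}/p)/\mathbb{Q}) \simeq \GL_2(\mathcal{O}_{\wp}/p)$ enables a Chebotarev density argument, applied to the finite extension of $H(A_{\wp}/p)$ cut out by the cocycles representing $s$ and $\delta$, to produce a Kolyvagin prime $\ell$ with a place $v \mid \ell$ of $H$ at which the localizations of $s$ and $\delta$ in $H^1_f(H_v, A_{\wp}/p)^{\chi}$ are independent and nonzero. Global Tate reciprocity $\sum_w \langle s_w, \kappa(\ell)_w\rangle_w = 0$ then collapses, via the isotropy of the finite subspaces at places $w \nmid \ell$, to a single pairing at $v$ which, after applying the reciprocity law to rewrite $\kappa(\ell)_v^{s}$ in terms of $\delta_v^{f}$, expresses the vanishing of a nondegenerate pairing between independent nonzero classes---a contradiction. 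This yields $\rank_{\mathcal{O}_{\wp}/p} S^{\chi} = 1$.
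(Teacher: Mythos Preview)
Your construction of the Euler system, the Kolyvagin derivative, the local reciprocity law, and the lower bound via $e_{\chi}\delta$ are all in line with the paper. The gap is in your upper-bound argument, and it is precisely the issue the paper is built to address.

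For a Kolyvagin prime $\ell$, the prime $\lambda'$ of $H$ above $\ell$ has residue degree $2$ over $\mathbb{Q}_\ell$, so the relevant Frobenius on $A_{\wp}/p$ is $\phi^2$, which is the identity. Hence the $\chi$-component of $\bigoplus_{\lambda'|\lambda} H^1_f(H_{\lambda'},A_{\wp}/p)$ has rank \emph{two} over $\mathcal{O}_{\wp}/p$, not one, and likewise for the singular quotient. Your final step reads global reciprocity as $\langle s_v,\gamma(\delta_v)\rangle=0$ and declares this a contradiction because the pairing is nondegenerate and $s_v,\delta_v$ are independent. But on a rank-$2$ space a nondegenerate form may certainly vanish on a pair of independent vectors; a single depth-one class $\kappa(\ell)$ only imposes a rank-one constraint. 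Even if you also apply reciprocity to $e_\chi\delta$ to force both $s_v$ and $(e_\chi\delta)_v$ into the rank-one annihilator of $(e_{\overline\chi}\kappa(\ell))^{\mathrm{sing}}_v$, you still owe a proof that a Kolyvagin prime $\ell$ exists with $s_v$ and $(e_\chi\delta)_v$ genuinely independent in that rank-two local space; this Chebotarev step requires showing the extensions cut out by the two cocycles are linearly disjoint over $H(A_{\wp}/p)$, which is not automatic and which you do not address.

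The paper circumvents this by the Bertolini--Darmon two-prime refinement. One first fixes an auxiliary Kolyvagin prime $q$ with $(e_{\overline\chi}\delta)_q\neq 0$, then for each Kolyvagin $\ell$ in a suitable Chebotarev set one shows that the \emph{two} classes $e_{\overline\chi}P(\ell)$ and $e_{\overline\chi}P(\ell q)$ together generate the rank-two local singular space at $\ell$. Global reciprocity with $P(\ell)$ kills $\psi_\ell(e_{\overline\chi}P(\ell))$, so $X_\ell^{\overline\chi}$ is cyclic, generated by $\psi_\ell(e_{\overline\chi}P(\ell q))$; global reciprocity with $P(\ell q)$ then gives $\psi_\ell(e_{\overline\chi}P(\ell q))=-\psi_q(e_{\overline\chi}P(\ell q))$, so every nonzero $X_\ell^{\overline\chi}$ equals $X_q^{\overline\chi}$. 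Since the $X_\ell^{\overline\chi}$ generate $(S^{\chi})^{\mathrm{dual}}$, the rank is at most one. The point, as the introduction states explicitly, is that complex conjugation does not act on $S^\chi$ when $\chi\neq\overline\chi$, so the classical eigenspace argument that collapses the local rank to one is unavailable; the auxiliary prime $q$ and the depth-two classes $P(\ell q)$ are what replace it.
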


To prove Theorem \ref{theorem 1}, we first view the $p$-adic \'{e}tale realization $A$ of the twisted motive associated to $f$ in the middle \'{e}tale cohomology of the associated Kuga-Sato varieties. The main two ingredients of the proof are the refinement of an Euler system of so-called Heegner cycles first considered by Nekov{\'a}{\v{r}} and Kolyvagin's descent machinery adapted by Nekov{\'a}{\v{r}} \cite{nekovar1992kolyvagin} to the setting of modular forms. In order to bound the rank of the $\chi$-eigenspace of the Selmer group $S^{\chi}$, we use Local Tate duality and the local reciprocity law to obtain information on the local elements of the Selmer group. Using a global pairing of the Selmer group and Cebotarev's density theorem, we translate this local information about the elements of $S^{\chi}$ into global information. 

The main novelty is the adaptation of the techniques by Bertolini and Darmon in \cite{bertolini1990descent} to the setting of modular forms that allow us to get around the action of complex conjugation. Indeed, unlike the case where $\chi$ is trivial, the complex conjugation $\tau$ does not act on $S^{\chi}$ as it maps it to $S^{\overline{\chi}}.$

\section{Motive associated to a modular form} \label{motive}

In this section, we describe the $p$-adic \'{e}tale realization $A$ of the motive associated to $f$ by Scholl \cite{scholl1990motives} and Deligne \cite{deligne1973modular} twisted by $r$.
Consider the congruence subgroup $\Gamma_0(N)$ for $N \geq 5$ of the modular group $\mathrm{SL}_2(\mathbb{Z})$  
$$ \Gamma_0(N) = \Bigg\{
\begin{pmatrix}
a & b \\ c & d 
\end{pmatrix}
 \in \mathrm{SL}_2(\mathbb{Z}) \ \ | \ \  c \equiv 0 \mod N \Bigg\}. $$
We denote by $Y_0(N)$ the smooth irreducible affine curve that is the moduli space classifying elliptic curves with $\Gamma_0(N)$ level structure, that is elliptic curves with cyclic subgroups of order $N$. Equivalently, $Y_0(N)$ classifies pairs of elliptic curves related by an $N$-isogeny.
Over $\mathbb{C}$, we have 
$$\mathbb{H}/ \Gamma_0(N) \ \simeq \ Y_0(N)_{\mathbb{C}} \ : \ \tau \ \mapsto \ \left( \mathbb{C}/(\mathbb{Z}+\mathbb{Z}\tau) \ , \ \langle \dfrac{1}{N} \rangle \right).$$
We denote by $X_0(N)$ the compactification of $Y_0(N)$ viewed as a Riemann surface and we let $j$ be the inclusion map $$j: Y_0(N) \hookrightarrow X_0(N).$$ 
The assumption $N\geq 5$ allows for the definition of the universal elliptic curve $$\pi: \mathscr{E} \longrightarrow X_0(N).$$ 
Let $$ \mathbb{Z}^2 \ \backslash \  (\mathbb{C} \times \mathbb{H})$$ be the universal generalized elliptic curve over the Poincar\'{e} upper half plane where $(m,n)$ in $\mathbb{Z}^2$ acts on $\mathbb{C} \times \mathbb{H}$ by 
$$(z, \tau) \ \mapsto \ (z+ m \tau + n, \tau ).$$
We denote by $\mathscr{E}$ the compact universal generalized elliptic curve of level $\Gamma_0(N)$.
Let $W_{r}$ be the \emph{Kuga-Sato variety} of dimension $r+1$, that is a compact desingularization of the $r$-fold fibre product
$$ \mathscr{E} \times_{X_0(N)} \cdots \times_{X_0(N)} \mathscr{E},$$ (see \cite{deligne1973modular} and the appendix by Conrad in \cite{bertolini2013generalized} for more details).

Fix a prime $p$ with $(p,N\phi(N))=1$.
Consider the sheaf $$\mathcal{F}=Sym^{2r-2}(R^1\pi_* \mathbb{Z}/p) .$$ 
Let $$\Gamma_{2r-2}=( \mathbb{Z}/N \rtimes \mu_2)^{2r-2} \rtimes \Sigma_{r-2}$$ where $\mu_2=\{ \pm 1\}$ and $\Sigma_{2r-2}$ is the symmetric group on $2r-2$ elements. Then $\Gamma_{2r-2}$ acts on $W_{2r-2}$, (see \cite[Sections~1.1.0,1.1.1]{scholl1990motives} for more details.)
The projector $$e_r \in \mathbb{Z}\left[\dfrac{1}{2N(r-2)!}\right][\Gamma_{2r-2}]$$ associated to $\Gamma_{2r-2}$, called Scholl's projector, belongs to the group of zero correspondences $$\mathrm{Corr}^0(W_{2r-2},W_{2r-2})_{\mathbb{Q}}$$ from $W_{2r-2}$ to itself over $\mathbb{Q}$, (see \cite[Section~2.1]{bertolini2012chow} for more details.).

\begin{remark}
The hypothesis $((2r-2)!,p)=1$ is not necessary by a combination of the work of Tsuji \cite{tsuji1994padic} on $p$-adic comparison theorems and Saito \cite{saito2000weight} on the Weight-Monodromy conjecture for Kuga-Sato varieties.
\end{remark}

\begin{proposition} \label{propcohomology}

$$H^1_{et}(X_0(N) \otimes \overline{\mathbb{Q}}, j_* \mathcal{F}) \simeq e_{r} \oplus_{i=0}^{r+1} H^i_{et}(W_r  \otimes \overline{H},\mathbb{Z}/p).$$
\end{proposition}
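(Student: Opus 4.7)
My plan is to derive the isomorphism from the Leray spectral sequence for the projection $\pi: W_r \to X_0(N)$, combined with the key combinatorial property of Scholl's projector $e_r$: namely that $e_r$ annihilates $R^q\pi_*\mathbb{Z}/p$ unless $q = r$, and that it identifies $e_r R^r\pi_*\mathbb{Z}/p$ with the sheaf $\mathcal{F}$. Concretely, I would write down the spectral sequence
\[
E_2^{p,q} = H^p_{et}(X_0(N) \otimes \overline{\mathbb{Q}}, j_* R^q\pi_*\mathbb{Z}/p) \Longrightarrow H^{p+q}_{et}(W_r \otimes \overline{\mathbb{Q}}, \mathbb{Z}/p),
\]
and observe that, since $X_0(N)$ is a smooth projective curve, only the columns $p = 0, 1, 2$ can be nonzero.

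To establish the claim about $e_r$, I would decompose $R^q\pi_*\mathbb{Z}/p$ via K\"unneth on the $r$-fold fiber product (before desingularization) as a sum of tensor products of $R^{i_j}\pi_*\mathbb{Z}/p$ with $\sum_j i_j = q$, and then track the action of each factor in $\Gamma_{2r-2} = (\mathbb{Z}/N \rtimes \mu_2)^{2r-2} \rtimes \Sigma_{2r-2}$. The subgroup $\mu_2^{\otimes r}$ acts as $-1$ on each copy of $R^1\pi_*$ and trivially on $R^0\pi_*$ and $R^2\pi_*$, so the sign idempotent inside $e_r$ kills any summand in which some tensor factor differs from $R^1\pi_*$; the $(\mathbb{Z}/N)^r$ factor disposes of torsion contributions coming from the level structure; and the $\Sigma_r$ factor projects onto $\Sym^r$. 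Putting these together gives $e_r R^q\pi_*\mathbb{Z}/p = \mathcal{F}$ for $q = r$ and $0$ otherwise.

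Because $e_r$ is an algebraic correspondence, it acts on the whole spectral sequence compatibly with the differentials. After projection by $e_r$, only the single row $q = r$ survives, and that row has no nontrivial differentials. Hence in total degree $r + 1$ the filtration collapses to its unique graded piece $e_r E_2^{1,r} = H^1_{et}(X_0(N) \otimes \overline{\mathbb{Q}}, j_*\mathcal{F})$, yielding the stated isomorphism. The main subtle point I expect to negotiate is the passage from the open fiber product to the smooth compactification $W_r$: one must show that $R^q\bar\pi_*\mathbb{Z}/p$ on $W_r$ agrees, after applying $e_r$, with $j_* R^q\pi_*\mathbb{Z}/p$ on the open variety. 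This is precisely the content of Scholl's construction of the projector, which is engineered so that $e_r$ annihilates the contributions from the cusps of $X_0(N)$ and from the exceptional divisors introduced by the desingularization. Invoking this boundary-killing property completes the argument.
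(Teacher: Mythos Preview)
Your approach is correct and is precisely the content of the references the paper invokes: the paper's own ``proof'' is nothing more than a citation to Scholl's Theorem~1.2.1 (together with Proposition~2.4 of Bertolini--Darmon--Prasanna and a remark of Nekov\'a\v{r} that the argument with $\mathbb{Q}_p$-coefficients goes through modulo $p$), and what you have sketched---the Leray spectral sequence for $\pi$, the K\"unneth decomposition of $R^q\pi_*$, the sign/symmetrization analysis of the $\Gamma$-action, and the boundary-killing property of $e_r$ at the cusps---is exactly how Scholl proves that theorem. So you are not taking a different route; you are unpacking the black box the paper leaves closed.

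One small bookkeeping remark: you oscillate between $r$ and $2r-2$ for the number of fiber factors (writing $\mu_2^{\otimes r}$, $\Sigma_r$, and ``row $q=r$'' while also quoting $\Gamma_{2r-2}=(\mathbb{Z}/N\rtimes\mu_2)^{2r-2}\rtimes\Sigma_{2r-2}$). The paper itself is not entirely consistent here, but for a weight-$2r$ form the relevant Kuga--Sato variety is the $(2r-2)$-fold fiber product, the surviving row is $q=2r-2$, and the symmetric-group factor is $\Sigma_{2r-2}$. This does not affect the logic of your argument, only the labels.
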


\begin{proof}
The proof is a combination of \cite[theorem~1.2.1]{scholl1990motives} and \cite[proposition~2.4]{bertolini2013generalized}. Note that the proof in \cite[theorem~1.2.1]{scholl1990motives} involves $\mathbb{Q}_p$ coefficients but it is still valid in our setting, (see the Remark following \cite[Proposition~2.1]{nekovar1992kolyvagin}).
\end{proof}

Define $$J=  H^1_{et}(X_0(N) \otimes \overline{\mathbb{Q}}, j_* \mathcal{F}).$$
For primes $\ell$ prime to $N$, the Hecke operators $T_\ell$ act on $X_0(N)$,
which induces an endomorphism of $H^1_{et}(X_0(N) \otimes \overline{\mathbb{Q}}, j_* \mathcal{F})$.
Let $A$ be its $f$-isotypic component with respect to the action of the Hecke operators.
Let $$I = Ker \{ \mathbb{T} \longrightarrow \mathcal{O}_F: T_\ell \longrightarrow a_\ell, \forall \ell \nmid N \}.$$
Then $A= \{ x \in J \ | \ Ix=0 \} $ is isomorphic to $J/IJ$. $A$ is a free $\mathcal{O}_F \otimes \mathbb{Z}_p$ module of rank 2, equipped with a continuous $\mathcal{O}_F$-linear action of $\Gal(\overline{\mathbb{Q}}/\mathbb{Q})$. Hence, there is a map $$e_A: J \longrightarrow A$$ that is equivariant under the action of Hecke operators and $\Gal(\overline{\mathbb{Q}}/\mathbb{Q})$.

Consider the \'{e}tale $p$-adic Abel-Jacobi map 
$$\Phi: CH^{r}(W_{2r-2}/H)_0 \longrightarrow H^1 (H,H^{2r-1}_{et} ( W_{2r-2} \otimes \overline{H},\mathbb{Z}_p (r ) ) )$$ where $CH^{r}(W_{2r-2}/H)_0$ is the group of homologically trivial cycles of codimension $2r-2$ on $W_{2r-2}$ defined over $H$, modulo rational equivalence.
Composing the Abel-Jacobi map with the projector $e_{r}$, we obtain a map $$\Phi: CH^{r}(W_{2r-2}/H)_0 \longrightarrow H^1(H,J).$$
The Abel-Jacobi map commutes with automorphisms of $W_{2r-2}$, so $\Phi$ factors through $$e_{r}(CH^{r}(W_{2r-2}/H)_0 \otimes \mathbb{Z}_p).$$ Proposition \ref{propcohomology} implies that $e_{r}H^{r+1}(W_{2r-2} \otimes \overline{H},\mathbb{Z}_p)=0$. 
Since $$CH^{r}(W_{2r-2}/H)_0= Ker(CH^{r}(W_{2r-2}/H) \longrightarrow H^{r+1}(W_{2r-2} \otimes \overline{H},\mathbb{Z}_p)),$$ we have $e_{r}(CH^{r}(W_{2r-2}/H)_0 \otimes \mathbb{Z}_p)=e_{r}(CH^{r}(W_{2r-2}/H) \otimes \mathbb{Z}_p)$.
Composing the former map with the map $e_A: J \longrightarrow A$, we get $$\Phi: e_{r}CH^{r}(W_{2r-2}/H)_0 \longrightarrow H^1(H,A).$$

\section{Heegner cycles} \label{Heegner cycles}

Consider an integer $m$ such that $(m,cNDp)=1.$
Recall that $H=K_c$ is the ring class field of $K$ of conductor $c$. We denote by $$H_m=K_{cm}$$ the ring class field of $K$ of conductor $cm$ for $m> 1$.
We describe Nekov{\'a}{\v{r}}'s construction of Heegner cycles as in \cite[Section~5]{nekovar1992kolyvagin}.

By the Heegner hypothesis, there is an ideal $\mathcal{N}$ of $\mathcal{O}_{cm}$, the order of $K$ of conductor $cm$, such that $$\mathcal{O}_{cm}/\mathcal{N} = \mathbb{Z}/N \mathbb{Z}.$$ Therefore, $\mathbb{C}/\mathcal{O}_{cm}$ and $\mathbb{C}/\mathcal{N}^{-1}$ define elliptic curves over $\mathbb{C}$ related by an $N$-isogeny. As points of $X_0(N)$ correspond to elliptic curves over $\mathbb{C}$ related by $N$-isogenies, this provides a \emph{Heegner point} $x_{m}$ of $X_0(N)$. By the theory of complex multiplication, $x_m$ is defined over the ring class field $H_m$ of $K$ of conductor $cm$, (see \cite{gross1984heegner} for more details). 
Let $E$ be the elliptic curve corresponding to $x_m$. Then $E$ has complex multiplication by $\mathcal{O}_{cm}$.
Letting $graph(\sqrt{-D})$ be the graph of the multiplication by $\sqrt{-D}$ on $E$, we denote by $Z_{E }$ the image of the divisor $$(graph(\sqrt{-D})-E\times 0 -D(0 \times E))$$ in the N\'{e}ron-Severi group $NS(E \times E)$ of $E \times E$, that is, the group of divisors of $E \times E$ modulo algebraic equivalence.
Consider the inclusion $$i: E^{2r-2} \longrightarrow W_{2r-2}. $$
Then $i_*(Z_E^{r-1})$ belongs to the Chow group $\mathrm{CH}^{r}(W_{2r-2}/H_m)_0$. 
Denote by $y_m$ the image of $i_*(Z_E^{r-1})$ by the $p$-adic \'{e}tale Abel-Jacobi map $$\Phi:\mathrm{CH}^{r}(W_{2r-2}/H_m)_0 \longrightarrow H^1(H_m,A)$$ 
as described in \cite{jannsen1988continous}.
We consider two crucial properties of the Galois cohomology classes thus obtained from Heegner cycles.

A prime $\ell$ inert in $K$ where $(\ell,cmNDp)=1$ is unramified in $H_m$. A prime $\lambda_m$ above $\ell$ in $H_m$ ramifies completely in $H_{\ell m}$.     
\begin{proposition} \label{corestriction}
Consider cohomology classes $y_n$ and $y_m$ with $n=\ell m$, where $\ell$ is a prime inert in $K$. Then $$T_{\ell} y_m= \cor_{H_n/H_m} y_n=a_{\ell} y_m.$$
\end{proposition}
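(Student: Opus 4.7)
The proposition splits into two equalities. The second, $T_{\ell} y_m = a_{\ell} y_m$, is immediate from the construction of $A$: since $A$ is cut out of $J$ by the ideal $I = \Ker\{\mathbb{T} \to \mathcal{O}_F : T_{\ell} \mapsto a_{\ell}\}$, the Hecke operator $T_{\ell}$ acts on $A$ as multiplication by $a_{\ell}$, and this action is inherited by $H^1(H_m, A)$ by functoriality. The content of the proposition is therefore the first equality $\cor_{H_n/H_m} y_n = T_{\ell} y_m$, which I would prove at the level of Chow cycles and then transport through the Abel-Jacobi map $\Phi$. This is legitimate because $\Phi$ is $\Gal(\overline{H_m}/H_m)$-equivariant, so corestriction in cohomology is induced by the pushforward of $H_n$-rational cycles to $H_m$-rational cycles.

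The cycle-level identity rests on complex multiplication theory. Because $\ell$ is inert in $K$ and coprime to $cmNDp$, the ring class field degree formula gives $[H_n : H_m] = \ell + 1$, and a prime $\lambda_m$ of $H_m$ above $\ell$ is totally ramified in $H_n$. By Shimura reciprocity applied to the CM elliptic curve $E'$ of conductor $c\ell m$ attached to $x_n$, the $\ell + 1$ conjugates $\{x_n^{\sigma} : \sigma \in \Gal(H_n/H_m)\}$ coincide with the $\ell + 1$ points of $X_0(N)$ of the form $(E/C, \text{ induced } \Gamma_0(N)\text{-structure})$, as $C$ ranges over the order-$\ell$ subgroups of $E[\ell]$, where $E$ is the CM curve of conductor $cm$ attached to $x_m$. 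These are exactly the components of the divisor $T_{\ell} \cdot x_m$ on $X_0(N)$. Lifting to $W_{2r-2}$, the cycle $i_*(Z_{E'}^{r-1})$ is built canonically from $\mathrm{graph}(\sqrt{-D})$, an element intrinsic to the CM datum, and is therefore carried by each $\sigma$ to the analogous cycle on the fibre over $x_n^{\sigma}$. Summing over $\sigma \in \Gal(H_n/H_m)$ thus produces the Hecke translate of $i_*(Z_E^{r-1})$ by $T_{\ell}$. Applying $\Phi$, composing with $e_A$, and using that $\Phi$, $e_A$, and Scholl's projector $e_r$ all commute with the Hecke action, yields $\cor_{H_n/H_m} y_n = T_{\ell} y_m$ in $H^1(H_m, A)$.

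The principal obstacle is the bookkeeping in the middle step: one must verify that the $\Gal(H_n/H_m)$-orbit of $i_*(Z_{E'}^{r-1})$ enumerates, with the correct multiplicities and orientations, every component of the Hecke correspondence at level $\ell$, and that the pushforward $i_*$ from $E^{2r-2}$ into $W_{2r-2}$ is compatible with both the Hecke action and Scholl's projector. This is precisely where the weight-$2r > 2$ situation departs from the classical weight-$2$ Heegner point Euler system relation: the extra $(r-1)$-fold self-product of the CM curve forces one to track the $\sqrt{-D}$-graph decoration through the $\ell$-isogeny, a step with no analogue in weight $2$. Once this compatibility is established the proof concludes by combining the cycle identity with the fact, recorded above, that $T_{\ell}$ acts on $A$ as the scalar $a_{\ell}$.
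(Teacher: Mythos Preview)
Your proposal is correct and follows essentially the same approach as the paper: establish the identity $T_\ell \cdot i_*(Z_{E_m}^{r-1}) = \sum_{g \in \Gal(H_n/H_m)} g \cdot i_*(Z_{E_n}^{r-1})$ at the level of cycles by identifying the $\ell+1$ points in the Hecke correspondence over $x_m$ with the Galois orbit of $x_n$, then push through the Abel-Jacobi map using its compatibility with Hecke operators and conclude with the fact that $T_\ell$ acts as $a_\ell$ on $A$. The paper's argument is terser and does not dwell on the higher-weight bookkeeping you flag as the ``principal obstacle,'' but the logical structure is the same.
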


\begin{proof}
Let $E_m$ be the elliptic curve corresponding to $x_m$. Then, we have
$$T_{\ell} (i_*(Z_{E_m}^{r-1}) )= \sum_{y } i_*(Z_{E_y}^{r-1}) ,$$ where the elements $ y  \in Y_0(N)$  correspond to $\ell$-isogenies $E_y \rightarrow E_m$ compatible with level $\Gamma_0(N)$ structure.
The set $\{y\}$ consists of the orbit of $x_n$ in $$\Gal(H_n/H_m) \simeq \Gal(K_n/K_m) \simeq \Gal(K_{\ell}/K_1).$$ %\simeq (\mathbb{Z}/ (\ell+1) \mathbb{Z})^{*}
Let $E_n$ be the elliptic curve corresponding to $ x_n$.
We have $$\sum_{y } i_*(Z_{E_y}^{r-1})= \sum_{g \in \Gal(H_n/H_m)} g \cdot i_*(Z_{E_n}^{r-1})  = \cor_{H_n/H_m} i_*(Z_{E_n}^{r-1}).$$
Since the action of the Hecke operators commutes with the Abel-Jacobi map, we obtain
$$T_{\ell} y_m =\cor_{H_n/H_m}y_n.$$
The equality $$T_{\ell}y_m=a_{\ell} y_m$$ follows from the definition of $A$ on which Hecke operators $T_{\ell}$ act by $a_{\ell}$. 
\end{proof}

We denote by $(y_n)_v$ the image of an element $y_n \in H^1( H_n ,A )$ in $H^1(H_{n,v},A)$. 

\begin{proposition}\label{local relation}
Consider cohomology classes $y_n$ and $y_m$ with $n=\ell m$, where $\ell$ is a prime inert in $K$. Let $\lambda_m$ be a prime above $\ell$ in $K_{m}$ and $\lambda_n$ the prime above $\lambda_m$ in $K_n$.
Then $$(y_n)_{\lambda_n} = Fr(\ell) (\mathrm{res}_{K_{\lambda_m}, K_{\lambda_n}} (y_m)_{\lambda_m} ) \hbox{ in } H^1(K_{\lambda_n},A).$$
\end{proposition}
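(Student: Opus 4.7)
The argument rests on the local geometry of $H_n/H_m$ at $\lambda_m$ combined with supersingular reduction of the CM elliptic curves involved, transported to the Kuga-Sato side through compatibility of the Abel-Jacobi map with specialization.

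First I would check that $\lambda_m$ is totally ramified in $H_n/H_m$. Since $\ell$ is inert in $K$ and coprime to $cm$, class field theory identifies $\Gal(H_n/H_m)$ with a quotient of $(\mathcal{O}_K/\ell\mathcal{O}_K)^\times$ by the image of $(\mathbb{Z}/\ell)^\times$ and global units, and matches this group with the inertia at any prime above $\ell$ in $H_n$. In particular $H_{n,\lambda_n}$ and $H_{m,\lambda_m}$ share the same residue field and the same maximal unramified extension, so $Fr(\ell)$ is canonically defined in $\Gal(H_{n,\lambda_n}^{ur}/H_{n,\lambda_n})$ and restricts to the usual arithmetic Frobenius on $H_{m,\lambda_m}$.

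Next, because $E_m$ and $E_n$ have CM by orders of $K$ in which $\ell$ is inert, they both acquire good supersingular reduction at $\lambda_m$ and $\lambda_n$ respectively. The $\ell$-isogeny $\phi:E_n\to E_m$ appearing in the Heegner construction is defined over $H_n$ and extends to the N\'eron models. Under reduction modulo $\lambda_n$, this $\phi$ agrees (up to automorphism) with the $\ell$-power Frobenius morphism on the supersingular elliptic curve: between supersingular elliptic curves in characteristic $\ell$ there is essentially a unique $\ell$-isogeny, namely Frobenius (or its dual), and the Heegner level structure selects Frobenius. Passing to the Kuga-Sato variety, functoriality of $Z_{(-)}$ in isogenies and of the embedding $i:E^{2r-2}\hookrightarrow W_{2r-2}$ implies that the cycle $i_*(Z_{E_n}^{r-1})$ reduces modulo $\lambda_n$ to $Fr(\ell)^*\bigl(i_*(Z_{E_m}^{r-1})\bigr)$.

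Finally I would invoke compatibility of the $p$-adic \'etale Abel-Jacobi map with specialization at a prime of good reduction, in the form used by \cite{nekovar1992kolyvagin}. Since $(p,\ell)=1$ and $W_{2r-2}$ has good reduction at $\lambda_n$, the localization $(y_n)_{\lambda_n}$ is computed on the special fibre; combined with the cycle-level relation above and the fact that $Fr(\ell)$ acts on the unramified part of $H^1(H_{n,\lambda_n},A)$ through the arithmetic Frobenius, this yields the asserted identity. The main obstacle is the second step: one must pin down the $\ell$-isogeny and its level structure carefully enough to ensure that it is $Fr(\ell)$ that appears, rather than its dual $Fr(\ell)^{\vee}$ or an automorphism twist, and one must track the projector $e_r$, the product structure on $E^{2r-2}$, and the reduction of $\mathrm{graph}(\sqrt{-D})$ so that the relation survives after projection into the $A$-isotypic component. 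Once this geometric identification is made, the remaining passage through the Abel-Jacobi map and the inflation-restriction identification of unramified cohomology with Frobenius action are formal.
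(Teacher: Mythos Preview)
The paper does not give its own proof of this proposition; it simply cites \cite[Proposition~6.1(2)]{nekovar1992kolyvagin}. Your outline is essentially a sketch of Nekov{\'a}{\v{r}}'s argument there: total ramification of $\lambda_m$ in $H_n/H_m$, supersingular reduction of the CM elliptic curves at $\ell$, the identification of the reduced $\ell$-isogeny with Frobenius on the special fibre (so that the Heegner cycles for $n$ and $m$ have the same specialization up to the action of $Fr(\ell)$), and compatibility of the \'etale Abel-Jacobi map with specialization at a prime of good reduction. You have also correctly flagged the one genuinely delicate point, namely the orientation issue of Frobenius versus its dual and the bookkeeping with the level structure and the projector $e_r$; this is exactly where the work lies in Nekov{\'a}{\v{r}}'s proof. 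So your approach matches the cited reference and there is nothing to add beyond pointing to \cite{nekovar1992kolyvagin} for the details.
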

\begin{proof}
The proof can be found in \cite[proposition~6.1(2)]{nekovar1992kolyvagin}.
\end{proof}

\section{The Euler system} \label{The Euler System}
Let $n=\ell_1 \cdots \ell_k$ be a squarefree product of primes $\ell_i$ inert in $K$ satisfying 
$$(\ell_i,DNpc)=1 \ \hbox{ for } \ i=1, \cdots, k.$$ The Galois group $G_n = \Gal(H_n/H)$ is isomorphic to the product over the primes $\ell$ dividing $n$ of the cyclic groups $\Gal(H_{\ell}/H)$ of order $\ell+1$.
Let $\sigma_{\ell}$ be a generator of $G_{\ell}$.
We denote by $\mathcal{O}_{\wp}$, the completion of $\mathcal{O}_F$ at a prime $\wp$ dividing $p$.
Then $\mathcal{O}_{F} \otimes \mathbb{Z}_p = \oplus_{\wp |p} \mathcal{O}_{\wp}.$
Let 
\begin{equation} \label{localized A}
A_{\wp}= A \otimes_{\mathcal{O}_F \otimes \mathbb{Z}_p} \mathcal{O}_{\wp}
\end{equation}
be the localization of $A$ at $\wp$.
Denote by $$y_{n,\wp} \in H^1(H_n,A_{\wp})$$ the $\wp$-component of $y_n \in H^1(H_n,A)$.
In this section, we use Operators \eqref{Kolyvagin operators0} considered by Kolyvagin to define Kolyvagin cohomology classes $P(n) \in H^1(H,A_{\wp}/p)$ using the cohomology classes $y_n$ in $ H^1(H_n,A)$ for appropriate $n$.
Let $$L= H(A_{\wp}/p)$$ be the smallest Galois extension of $H$ such that $\Gal(\overline{\mathbb{Q}}/L)$ acts trivially on $A_{\wp}/p$. 
We will denote by $\Frob_{F_1/F_2}(\alpha)$, the conjugacy class of the Frobenius substitution of the prime $\alpha$ of $F_2$ in $\Gal(F_1/F_2)$.

A prime $\ell$ will be referred to as a \emph{Kolyvagin prime} if it is such that $$(\ell,DNpc)=1 \hbox{ and } \Frob_{\ell}(L/\mathbb{Q})=\Frob_{\infty}(L/\mathbb{Q}),$$ where $\Frob_{\infty}(L/\mathbb{Q})$ refers to the conjugacy class of complex conjugation. 
Given a Kolyvagin prime $\ell$, the Frobenius condition implies that it is inert in $K$. 
Denote by $\lambda$ the unique prime in $K$ above $\ell$. Since $\lambda$ is unramified in $H$ and has the same image as $\Frob_{\infty}(L/K)=\tau^2=Id$ by the Artin map, it splits completely in $H$. 
Let $\lambda'$ be a prime of $H$ lying above $\lambda$, then $\lambda'$ splits completely in $L$ as it lies in the kernel of the Artin map:
 $$\Frob_{\lambda'}(L/ H)
=\tau^2=Id.$$ 
 
The Frobenius condition also implies that 
\begin{align} \label{congruence}
%\left( \dfrac{-D}{l} \right) = -1 
a_{\ell} \equiv \ell+1 \equiv 0 \mod p.
\end{align}
Indeed, the characteristic polynomial of the complex conjugation acting on $A_{\wp}/p$ is $x^2-1$
while the characteristic polynomial of $\Frob(\ell)$ is $$x^2 -a_{\ell}/\ell^{r} x +1/\ell.$$ The latter corresponds to the polynomial $x^2 -a_{\ell} x +\ell^{2r-1}$ where we make the change of variable $x \rightarrow \ell^rx $ dictated by the Tate twist $r$ of $Y_p$. As a consequence, we obtain the polynomial $$x^2 \ell^{2r} -a_{\ell} \ell^{r} x +\ell^{2r-1}= \ell^{2r}(x^2 -a_{\ell}/\ell^{r} x +1/\ell).$$  

We assume that the primes $\ell$ dividing $n$ are Kolyvagin primes.
Let 
\begin{align} \label{Kolyvagin operators0}
\Tr_{\ell}=\sum_{i=0}^{\ell} \sigma_{\ell}^i, \qquad D_{\ell}= \sum_{i=1}^{\ell} i \sigma_{\ell}^i.
\end{align} 
These operators are related by $$(\sigma_{\ell} -1) D_{\ell} = {\ell}+1-\Tr_{\ell}.$$
We define $D_n= \prod_{{\ell}|n} D_{\ell}$ in $\mathbb{Z}[G_n]$.
And we denote by $\mathrm{red}(x)$ the image of an element $x$ of $ H^1(H_n, A_{\wp})$ in $H^1(H_n, A_{\wp}/p)$ induced by the projection $$A_{\wp} \longrightarrow A_{\wp}/p.$$ 

\begin{proposition} \label{fixed}
We have
$$D_n \mathrm{red}(y_{n,\wp}) \hbox{ belongs to } H^1(H_n,A_{\wp}/p)^{G_n}.$$ 
\end{proposition}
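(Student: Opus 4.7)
The plan is to run the standard Kolyvagin telescoping argument, reducing invariance under $G_n$ to invariance under each generator $\sigma_\ell$ and then exploiting the two congruences $\ell+1\equiv 0\pmod p$ and $a_\ell\equiv 0\pmod p$ that come with a Kolyvagin prime.

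First I would note that $G_n\simeq\prod_{\ell\mid n}G_\ell$, so it suffices to show that $(\sigma_\ell-1)\,D_n\,\mathrm{red}(y_{n,\wp})=0$ in $H^1(H_n,A_{\wp}/p)$ for every prime $\ell\mid n$. Fix such an $\ell$ and write $m=n/\ell$. Since $D_n=D_\ell D_m$ and $D_m$ commutes with $\sigma_\ell-1$, the displayed identity $(\sigma_\ell-1)D_\ell=\ell+1-\Tr_\ell$ gives
$$(\sigma_\ell-1)D_n\,y_{n,\wp}=D_m\bigl((\ell+1)y_{n,\wp}-\Tr_\ell y_{n,\wp}\bigr).$$

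Next I would identify the trace with a restriction of a corestriction. Because $\Gal(H_n/H_m)=G_\ell$ is generated by $\sigma_\ell$, the standard formula $\res_{H_m,H_n}\circ\cor_{H_n/H_m}=\sum_{g\in G_\ell}g=\Tr_\ell$ holds on $H^1(H_n,A_{\wp})$. Applying Proposition \ref{corestriction} at the prime $\ell$, which is inert in $K$ and coprime to $cmNDp$, gives $\cor_{H_n/H_m}y_{n,\wp}=a_\ell\,y_{m,\wp}$, whence
$$\Tr_\ell y_{n,\wp}=a_\ell\,\res_{H_m,H_n}(y_{m,\wp}).$$
Substituting back,
$$(\sigma_\ell-1)D_n\,y_{n,\wp}=D_m\bigl((\ell+1)y_{n,\wp}-a_\ell\,\res_{H_m,H_n}(y_{m,\wp})\bigr).$$

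Finally, by the defining congruences \eqref{congruence} for a Kolyvagin prime, both $\ell+1$ and $a_\ell$ lie in $p\mathcal{O}_\wp$, so the right-hand side becomes zero after reduction modulo $p$, i.e.\ in $H^1(H_n,A_{\wp}/p)$. Thus $\sigma_\ell$ fixes $D_n\,\mathrm{red}(y_{n,\wp})$ for every $\ell\mid n$, and since the $\sigma_\ell$ generate $G_n$, the class $D_n\,\mathrm{red}(y_{n,\wp})$ lies in $H^1(H_n,A_{\wp}/p)^{G_n}$. There is no real obstacle here; the only point requiring a bit of care is the identification $\Tr_\ell=\res\circ\cor$, which is built into the cyclic structure of $G_\ell$, and the book-keeping that ensures both $\ell+1\equiv 0$ and $a_\ell\equiv 0\pmod p$ are used simultaneously to kill the obstruction.
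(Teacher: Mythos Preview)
Your proof is correct and follows essentially the same route as the paper: reduce to invariance under each $\sigma_\ell$, apply the identity $(\sigma_\ell-1)D_\ell=\ell+1-\Tr_\ell$, identify $\Tr_\ell$ with $\res_{H_m,H_n}\circ\cor_{H_n/H_m}$ and invoke Proposition~\ref{corestriction}, then kill both terms using the congruences~\eqref{congruence}. The only cosmetic difference is that you carry out the computation with $y_{n,\wp}$ and reduce modulo $p$ at the end, whereas the paper works with $\mathrm{red}(y_{n,\wp})$ throughout; since all the operators involved commute with reduction, this is immaterial.
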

\begin{proof}
It is enough to show that for all $\ell$ dividing $n$, $$(\sigma_{\ell}-1)D_n \mathrm{red}( y_{n,\wp})=0$$ in $H^1(H_n,A_{\wp}/p)$.
We have $$(\sigma_{\ell}-1)D_n=(\sigma_{\ell}-1)D_\ell D_m=(\ell+1-\Tr_{\ell})D_m.$$
Since $\mathrm{res}_{H_m,H_n} \circ \cor_{H_n/H_m} = \Tr_{\ell}$, Proposition \ref{corestriction} implies 
$$(\ell+1-\Tr_{\ell})D_m \mathrm{red}( y_{n,\wp}) = (\ell+1)D_m \mathrm{red} (y_{n,\wp})- a_{\ell} \mathrm{res}_{H_m,H_n} (D_m \mathrm{red}( y_{m,\wp})).$$
The latter is congruent to 0 modulo $ p$ by Equation \eqref{congruence}. 
\end{proof}

\begin{proposition}\label{galois group}
For $n$ such that $(n,cpND)=1$, we have $$H^0(H_n,A_{\wp}/p)=H^0(\mathbb{Q}, A_{\wp}/p)=0,$$ $$ \hbox{and } \Gal(H_n(A_{\wp}/p)/H_n) \simeq \Gal(H(A_{\wp}/p)/H) \simeq \Gal(K(A_{\wp}/p)/K) \simeq \Gal(\mathbb{Q}(A_{\wp}/p)/\mathbb{Q} ).$$
\end{proposition}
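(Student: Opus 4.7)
The plan is to reduce the proposition to a single linear disjointness statement,
\[
H_n \cap \mathbb{Q}(A_{\wp}/p) \;=\; \mathbb{Q}.
\]
Once this is in hand, the same disjointness holds for each intermediate field in the tower $\mathbb{Q} \subseteq K \subseteq H \subseteq H_n$, so the three chained isomorphisms in the proposition fall out of the restriction map on Galois groups. The vanishing $H^0(\mathbb{Q}, A_{\wp}/p) = 0$ is then immediate from the big-image hypothesis: the standard rank-$2$ module for $\GL_2(\mathcal{O}_{\wp}/p)$ has no nonzero fixed vector. The vanishing over $H_n$ follows from the Galois isomorphism just obtained.

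For the disjointness, I set $M = H_n \cap \mathbb{Q}(A_{\wp}/p)$, a Galois extension of $\mathbb{Q}$, and observe that $\Gal(M/\mathbb{Q})$ is simultaneously a quotient of the generalized dihedral group $\Gal(H_n/\mathbb{Q})$ (dihedral since $H_n/K$ is abelian with complex conjugation acting by inversion) and of $\GL_2(\mathcal{O}_{\wp}/p)$. A dihedral group has no non-abelian simple quotients, while any non-cyclic quotient of $\GL_2(\mathcal{O}_{\wp}/p)$ has $\mathrm{PSL}_2$ of the residue field as a subquotient: this uses the perfectness of $\mathrm{SL}_2$ over a sufficiently large finite field, combined with the fact that the kernel of reduction $\GL_2(\mathcal{O}_{\wp}/p) \twoheadrightarrow \GL_2(\mathcal{O}_{\wp}/\wp)$ is a $p$-group. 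Comparing, $\Gal(M/\mathbb{Q})$ must be abelian, hence must factor through the determinant.

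Next, since the determinant of the Galois representation on $A_{\wp}/p$ is a power of the mod-$p$ cyclotomic character (trivial nebentype, Tate twist by $r$), $M$ is contained in the $p$-cyclotomic field and hence is ramified at most at $p$. On the other hand $M \subseteq H_n$, and $H_n/\mathbb{Q}$ is ramified only at primes dividing $cnD$, all coprime to $p$ by the hypothesis $(n, cpND)=1$ together with the assumptions on $c$ and $D$ relative to $p$. Therefore $M/\mathbb{Q}$ is unramified everywhere, so $M = \mathbb{Q}$ by Minkowski's theorem.

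The main technical hurdle will be the structural claim about abelian quotients of $\GL_2(\mathcal{O}_{\wp}/p)$ — equivalently, the identity $[\GL_2, \GL_2] = \mathrm{SL}_2$ over the Artin local ring $\mathcal{O}_{\wp}/p$. The surjectivity hypothesis on the mod-$\wp$ image (which forces the residue field to be large enough for $\mathrm{SL}_2$ to be perfect) together with the $p$-group kernel of the reduction map should make this routine. Granted that, every other ingredient in the argument above is formal.
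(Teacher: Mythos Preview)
Your argument is correct, but it takes a longer route than the paper's. Both you and the paper reduce to the disjointness $H_n \cap \mathbb{Q}(A_{\wp}/p) = \mathbb{Q}$ and then invoke Minkowski once that intersection is shown to be everywhere unramified over $\mathbb{Q}$. The difference is in how unramifiedness is obtained. The paper simply observes that $\mathbb{Q}(A_{\wp}/p)/\mathbb{Q}$ is unramified outside the primes dividing $Np$ (a standard fact for the Galois representation attached to a level-$N$ form) while $H_n/\mathbb{Q}$ is unramified outside the primes dividing $cnD$; since $(cnD,\,Np)=1$ under the running hypotheses, the intersection is unramified everywhere and one is done in two lines. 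Your detour through the commutator structure of $\GL_2(\mathcal{O}_{\wp}/p)$ and the cyclotomic nature of the determinant recovers only unramifiedness outside $p$, which is strictly weaker than what the direct ramification comparison gives for free, so the group theory is unnecessary here. As a minor aside, your parenthetical that the surjectivity hypothesis ``forces the residue field to be large enough for $\mathrm{SL}_2$ to be perfect'' is not accurate: nothing excludes residue field $\mathbb{F}_3$, over which $\mathrm{SL}_2$ is not perfect --- though $[\GL_2,\GL_2]=\mathrm{SL}_2$ still holds there since elementary matrices are commutators in $\GL_2$, so your conclusion survives. The case $p=2$ is genuinely excluded by $p\nmid \phi(N)$ with $N\geq 5$.
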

\begin{proof}
Indeed, $H_n/\mathbb{Q}$ and $\mathbb{Q}(A_{\wp}/p)/\mathbb{Q}$ are unramified outside primes dividing $cnD$ and $ Np$ respectively, so $H_n \cap \mathbb{Q}(A_{\wp}/p) $ is unramified over $\mathbb{Q}$. Since $\mathbb{Q}$ has no unramified extensions, we obtain that $H_q \cap \mathbb{Q}(A_{\wp}/p)=\mathbb{Q}$, and therefore $H^0(H_q, A_{\wp}/p)=H^0(\mathbb{Q},Y_p)$.
The hypothesis $$ \Gal(\mathbb{Q}(A_{\wp}/p)/\mathbb{Q}) \simeq \mathrm{GL}_2(\mathcal{O}_{\wp}/p)$$ further implies that $H^0(\mathbb{Q},A_{\wp}/p)=0$.
The result follows.
\end{proof}

\begin{proposition}
The restriction map $$\mathrm{res}_{H,H_n}: H^1(H, A_{\wp}/p) \longrightarrow H^1(H_n, A_{\wp}/p)^{G_n}$$ is an isomorphism for $(n,cpND)=1$.
\end{proposition}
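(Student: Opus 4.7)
The plan is to apply the Hochschild--Serre (inflation--restriction) spectral sequence to the tower $H \subseteq H_n \subseteq \overline{\mathbb{Q}}$ with Galois group $G_n = \Gal(H_n/H)$ acting on the module $A_{\wp}/p$. This yields the five-term exact sequence
\begin{equation*}
0 \longrightarrow H^1(G_n, (A_{\wp}/p)^{\Gal(\overline{\mathbb{Q}}/H_n)}) \longrightarrow H^1(H, A_{\wp}/p) \xrightarrow{\mathrm{res}_{H,H_n}} H^1(H_n, A_{\wp}/p)^{G_n} \longrightarrow H^2(G_n, (A_{\wp}/p)^{\Gal(\overline{\mathbb{Q}}/H_n)}).
\end{equation*}
So to prove that the restriction map is an isomorphism, it suffices to show that both the left-hand and right-hand terms vanish.

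The key observation is that $(A_{\wp}/p)^{\Gal(\overline{\mathbb{Q}}/H_n)} = H^0(H_n, A_{\wp}/p)$, which is precisely the group shown to vanish in Proposition \ref{galois group} under the hypothesis $(n,cpND)=1$ together with the surjectivity of the mod-$p$ Galois representation $\Gal(\mathbb{Q}(A_{\wp}/p)/\mathbb{Q}) \simeq \mathrm{GL}_2(\mathcal{O}_{\wp}/p)$. Substituting $H^0(H_n, A_{\wp}/p) = 0$ into the five-term sequence collapses the kernel and obstruction of $\mathrm{res}_{H,H_n}$ to zero, giving the desired isomorphism.

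I do not foresee a serious obstacle here: the argument is a direct application of inflation--restriction, and all of the required input (the vanishing of $H^0(H_n, A_{\wp}/p)$) has already been established in the immediately preceding proposition. The only thing to double-check is that the use of Proposition \ref{galois group} applies for every intermediate field we might need, but since the statement there is given directly for $H_n$ (with the hypothesis $(n,cpND)=1$), there is nothing further to verify.
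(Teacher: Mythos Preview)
Your proof is correct and is essentially identical to the paper's: the paper also invokes the inflation--restriction sequence for $H \subseteq H_n$ and kills the outer terms by citing $H^0(H_n, A_{\wp}/p)=0$ from the preceding proposition. There is nothing to add.
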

\begin{proof}
This follows from the inflation-restriction sequence:
\begin{align*}
0 & \rightarrow H^1(H_n/H,A_{\wp}/p) \xrightarrow{inf} H^1(H,A_{\wp}/p) \xrightarrow{\mathrm{res}} H^1(H_n,A_{\wp}/p)^{G_n} \rightarrow H^2(H_n/H,A_{\wp}/p)
\end{align*}
using the fact that $H^0(H_n, A_{\wp}/p)=0$ by Proposition \ref{galois group}. 
\end{proof}

As a consequence, the cohomology classes $D_n \mathrm{red}(y_{n,\wp})$ can be lifted to cohomology classes $P(n)$ in $H^1(H,A_{\wp}/p)$ such that $$\mathrm{res}_{H,H_n} P(n) =  D_n \mathrm{red}(y_{n,\wp}).$$

\begin{proposition} \label{ramification}
Let $v$ be a prime of $H$. If $v |N$, then $P(n)_v$ is trivial. If $v \nmid Nnp$, then $P(n)_v$ lies in $H^1(H_v^{ur}/H_v,A_{\wp}/p)$.
\end{proposition}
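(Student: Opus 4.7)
The plan is to localize the defining relation $\mathrm{res}_{H,H_n}P(n) = D_n\,\mathrm{red}(y_{n,\wp})$ at a prime $w$ of $H_n$ above $v$ and thereby reduce the unramifiedness of $P(n)_v$ to that of the local Heegner classes $(y_{n,\wp})_w$. The crucial simplifying observation in both parts is that $v \nmid n$: in the second case this holds by hypothesis, and in the first case it follows because the Kolyvagin primes $\ell \mid n$ are by construction coprime to $DNpc$. Since the ring class field extension $H_n/H$ is unramified away from rational primes dividing $n$, the completion $H_{n,w}/H_v$ is unramified, so $H_v^{ur}=H_{n,w}^{ur}$ and the inertia subgroups coincide, $I_v=I_w$. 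Hence a class in $H^1(H_v,A_\wp/p)$ is unramified at $v$ if and only if its restriction to $H^1(H_{n,w},A_\wp/p)$ is unramified at $w$. Because $D_n\in\mathbb{Z}[G_n]$ permutes the primes of $H_n$ above $v$ and preserves unramified classes, and reduction mod $p$ commutes with passage to unramified cohomology, it is enough to prove that each $(y_{n,\wp})_w$ lies in $H^1(H^{ur}_{n,w}/H_{n,w},A_\wp)$.

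For the case $v \nmid Nnp$, one has $w \nmid Np$, so the Kuga-Sato variety $W_{2r-2}$ has good reduction at $w$ and $A_\wp$ is unramified at $w$. The standard fact that the $p$-adic étale Abel-Jacobi image of a null-homologous cycle on a smooth proper variety with good reduction lies in the unramified cohomology (via smooth and proper base change) gives $(y_{n,\wp})_w\in H^1(H^{ur}_{n,w}/H_{n,w},A_\wp)$, from which the conclusion follows by the reduction above.

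For the case $v \mid N$, the variety $W_{2r-2}$ has bad reduction at $w$, so the naive good-reduction argument breaks down; however, by the analysis of Nekov\'a\v{r} in \cite{nekovar1992kolyvagin} using integral moduli-theoretic models of the Heegner cycle, the class $(y_{n,\wp})_w$ still lies in $H^1(H^{ur}_{n,w}/H_{n,w},A_\wp)$. By the same reduction, $P(n)_v\in H^1(H_v^{ur}/H_v,A_\wp/p)$. Finally, the hypothesis of Theorem \ref{theorem 1} that no eigenvalue of $Fr(v)$ on $A_\wp^{I_v}$ is congruent to $1$ modulo $p$ makes $Fr(v)-1$ invertible on $(A_\wp/p)^{I_v}$, so
\[
H^1(H_v^{ur}/H_v,A_\wp/p)=H^1(\widehat{\mathbb{Z}},(A_\wp/p)^{I_v})=0,
\]
forcing $P(n)_v=0$.

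The main obstacle is precisely the second step: establishing unramifiedness of the Abel-Jacobi image of the Heegner cycle at primes dividing the level $N$, where the smooth-proper base-change argument of the first step is unavailable and one must appeal to the finer integral structure of the Heegner construction.
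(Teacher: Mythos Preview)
Your argument for $v\nmid Nnp$ matches the paper's: both localize the relation $\mathrm{res}_{H,H_n}P(n)=D_n\,\mathrm{red}(y_{n,\wp})$ at a place $w\mid v$ of $H_n$, use that $H_{n,w}/H_v$ is unramified (since $v\nmid n$), and reduce to the unramifiedness of $(y_{n,\wp})_w$ coming from good reduction of the Kuga--Sato variety. The paper states this last point more tersely, but the content is the same.

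For $v\mid N$ you take a genuinely different route from the paper. You first argue that $P(n)_v$ lands in the unramified subgroup $H^1(H_v^{ur}/H_v,A_\wp/p)$ by invoking Nekov\'a\v{r}'s integral analysis of Heegner cycles at bad primes, and only then use the eigenvalue hypothesis on $Fr(v)$ to kill this subgroup. The paper bypasses the unramifiedness question entirely: it shows the \emph{whole} group $H^1(H_v,A_\wp/p)$ vanishes. Indeed, the local Euler characteristic formula gives $|H^1|=|H^0|\cdot|H^2|$, local Tate duality and self-duality of $A_\wp/p$ give $|H^2|=|H^0|$, and the hypothesis on $Fr(v)$ forces $H^0(H_v,A_\wp/p)=((A_\wp/p)^{I_v})^{Fr(v)=1}=0$. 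Thus $|H^1(H_v,A_\wp/p)|=1$ and $P(n)_v=0$ automatically.

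The paper's approach is cheaper: it avoids precisely the obstacle you flag at the end, namely the delicate unramifiedness of the Abel--Jacobi image at primes of bad reduction. Your approach is correct provided the cited input from \cite{nekovar1992kolyvagin} is granted, and has the minor advantage that the final cohomological computation is the elementary identification $H^1(\widehat{\mathbb{Z}},(A_\wp/p)^{I_v})=(A_\wp/p)^{I_v}/(Fr(v)-1)$ rather than the Euler-characteristic\,/\,duality package. But since the whole point is to avoid hard geometric input when a local duality trick suffices, the paper's route is the preferable one here.
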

\begin{proof}
If $v$ divides $N$, we follow the proof in \cite[lemma~10.1]{nekovar1992kolyvagin}. 
We denote by $$(A_{\wp}/p)^{dual}=\Hom(A_{\wp}/p,\mathbb{Z}/p\mathbb{Z}(1)  )$$ the local Tate dual of $A_{\wp}/p$.
The local Euler characteristic formula \cite[Section~1.2]{milne1986arithmetic} yields $$| H^1(H_v,A_{\wp}/p)|=|H^0(H_v,A_{\wp}/p)|\times |H^2(H_v,A_{\wp}/p)|.$$
Local Tate duality then implies $$| H^1(H_v,A_{\wp}/p)|=|H^0(H_v,A_{\wp}/p)|^2.$$ 
The Weil conjectures and the assumption on $Fr(v)$ imply that $((A_{\wp}/p)^{I_v})^{Fr(v)}=0$ where $$<Fr(v)> \ =\ \Gal(H_v^{ur}/H_v)$$ and $I_v=\Gal(\overline{H_v}/H_v^{ur} )$ is the inertia group. 
Therefore, $$((A_{\wp}/p)^{I_v})^{G(H_v^{ur}/H_v)} = (A_{\wp}/p)^{G(\overline{H_v}/H_v)} = H^0(H_v,A_{\wp}/p)= 0.$$

To prove the second assertion, if $v$ does not divide $ Nnp$, we observe that $$\mathrm{res}_{H,H_n} P(n)_v=D_nred(y_{n,\wp})_{v'}$$ belongs to $H^1(H^{ur}_{n,v'}/H_{n,v'},A_{\wp}/p)$ and $H_{n,v'}/H_v$ is unramified for $v'$ in $H_n$ above $v$.
\end{proof}

\section{Localization of Kolyvagin classes} \label{Localization0}
Nekov{\'a}{\v{r}} \cite{nekovar1992kolyvagin} studied the relation between the localization of Kolyvagin cohomology classes $P(m \ell)$ and $P(m)$, for appropriate $m$ and $\ell$ by explicitly computing cocycles using the Euler system properties. We briefly explain his development in this section.

\paragraph{Set up.}
We denote by
\begin{align*}
& G_1=\Gal(\overline{\mathbb{Q}}/H_{ 1}), \ \ G_{\ell }=\Gal(\overline{\mathbb{Q}}/H_{\ell }), \ \ \tilde{G_{ 1}}=\Gal(\overline{\mathbb{Q}}/H_{1}^+),\\
\hbox{ and } 
& G_{\lambda_1}=\Gal(\overline{\mathbb{Q}_\ell}/H_{1,\lambda_1}), \ \ G_{\lambda_{\ell }}=\Gal(\overline{\mathbb{Q}_\ell}/H_{\ell ,\lambda_{\ell}} ) ,\ \ \tilde{G_{\lambda_1}}=\Gal(\overline{\mathbb{Q}_\ell}/\mathbb{Q}_\ell), 
\end{align*}
where $H_{ 1}^+$ is the maximal real subfield of $H_{ 1}$.
Then $$G_1/G_{\ell }=<\sigma>, \ \ \tilde{G_1}/G_1=<\tau>, \ \ \tilde{G_1}/G_{\ell } = \Gal(H_{\ell }/H^{+})= <\sigma> \rtimes <\tau>$$ for some $\sigma$ and $\tau$ of order $\ell+1$ and 2 respectively.
There is a surjective homomorphism 
$$\pi: \tilde{G_{\lambda_1}} \xrightarrow[]{res} \Gal(\mathbb{Q}_\ell^{t}/\mathbb{Q}_\ell ) = \hat{\mathbb{Z}}^{'}(1) \rtimes  2 \hat{\mathbb{Z}},$$
where $$\Gal(\mathbb{Q}_\ell^{t}/\mathbb{Q}_\ell^{ur} )  \simeq \hat{\mathbb{Z}}^{'}(1)= \prod_{ q \neq \ell} \mathbb{Z}_{\ell}$$ is generated by an element $\tau_\ell$ and $$\Gal(\mathbb{Q}_\ell^{ur}/\mathbb{Q}_\ell )\simeq  \hat{\mathbb{Z}}$$ is generated by the Frobenius element $\phi$ at $\ell$ and $\phi \tau_\ell \phi^{-1} = (\tau_\ell)^{\ell}$. 
One can show that $$H^1(G_{\lambda_1},A_{\wp}/p)=H^1(G_{\lambda_{\ell }},A_{\wp}/p) \simeq H^1( 2\hat{Z}, A_{\wp}/p) \simeq (A_{\wp}/p)/((\phi^2-1)A_{\wp}/p)$$
and a cocycle $F$ in $Z^1(\ \hat{\mathbb{Z}}^{'}(1) \rtimes 2 \hat{\mathbb{Z}}, \ A_{\wp}/p)$ acts by
$$F(\tau_\ell^u \phi^{2v})=(1+\phi^2+\cdots+\phi^{2(v-1)})a + (\phi^2-1) b,$$ where $[F]=a \mod (\phi^2-1)A_{\wp}/p.$

Let $H_{\lambda}^{ur}$ be the maximal unramified extension of $H_{\lambda}$, and let $H_{\lambda}^t$ be the maximal tamely ramified extension of $H_{\lambda}$, 
We denote by \begin{equation}
\gamma  \ : \ H^1(H_{\lambda}^{ur}/H_{\lambda},A) \simeq H^1(H_{\lambda}^{ur}, A)^{\phi}
\end{equation}
the isomorphism that switches cocycles with same values on the arithmetic Frobenius element $\phi$ generating $\Gal(H_{\lambda}^{ur}/H_{\lambda})$ and the generator $\tau_\ell$ of $\Gal(H_{\lambda}^{t}/H_{\lambda}^{ur} )$ modulo $p$.

\begin{proposition}
We have 
\begin{align} \label{localrelation}
\left(\dfrac{\ell+1}{p}\epsilon - \dfrac{a_\ell}{p} \right) \gamma(P(m)_{\lambda_1})= \dfrac{a_\ell \epsilon/\ell^r -1/ \ell  -1}{p} P(\ell m)_{\lambda_1}
\end{align}
where $\lambda_1$ is a prime of $H_1$ dividing $\ell$, and $\epsilon= \pm 1$.
Furthermore, $P(\ell m)_{\lambda_1}$ is unramified at $\ell$.
\end{proposition}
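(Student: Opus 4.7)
The plan is to establish the identity at the level of explicit cocycles on the tame quotient of $\tilde{G}_{\lambda_1}$, following Nekov{\'a}{\v{r}}'s strategy. First observe that the coefficients in \eqref{localrelation} are well-defined in $\mathcal{O}_\wp/p$: by \eqref{congruence} we have $a_\ell \equiv \ell+1 \equiv 0 \pmod{p}$, so $(\ell+1)/p$ and $a_\ell/p$ lift to $p$-integral elements; and since $\ell \equiv -1 \pmod{p}$ the quantity $a_\ell \epsilon/\ell^r - 1/\ell - 1$ lies likewise in $p \mathbb{Z}_p$. Hence \eqref{localrelation} is a well-posed identity in $H^1(H_{1,\lambda_1}, A_\wp/p)$.

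Fix integral cocycles $\tilde y_m$ and $\tilde y_{\ell m}$ representing $y_{m,\wp}$ and $y_{\ell m,\wp}$. Proposition \ref{corestriction} yields the cocycle identity $\Tr_\ell \tilde y_{\ell m} \equiv a_\ell \res \tilde y_m$ modulo coboundaries; combined with the telescoping relation $(\sigma_\ell - 1) D_\ell = \ell+1 - \Tr_\ell$, this writes $(\sigma_\ell - 1) D_{\ell m} \tilde y_{\ell m}$ in terms of $D_m \tilde y_m$ up to an explicit coboundary. After localizing at $\lambda_1$, Proposition \ref{local relation} pins down the value of the localized cocycle on the tame inertia generator $\tau_\ell$ via $(y_{\ell m})_{\lambda_{\ell m}} = Fr(\ell) \res (y_m)_{\lambda_m}$.

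The surjection $\pi: \tilde{G}_{\lambda_1} \twoheadrightarrow \hat{\mathbb{Z}}'(1) \rtimes 2\hat{\mathbb{Z}}$ together with the explicit formula $F(\tau_\ell^u \phi^{2v}) = (1+\phi^2+\cdots+\phi^{2(v-1)}) a + (\phi^2 - 1) b$ now allow one to evaluate $P(m)_{\lambda_1}$ and $P(\ell m)_{\lambda_1}$ on $\phi$ and $\tau_\ell$. Because $\Frob_\ell(L/\mathbb{Q}) = \Frob_\infty(L/\mathbb{Q})$ for a Kolyvagin prime, $\Frob(\ell)$ acts on $A_\wp/p$ with eigenvalues $\pm 1$; projecting to the $\epsilon$-eigenspace introduces the sign $\epsilon$ into both coefficients, matching the factor $(\ell+1)\epsilon - a_\ell$ with the Frobenius polynomial evaluated at $\epsilon$. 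The isomorphism $\gamma$ swaps the $\phi$- and $\tau_\ell$-coordinates, turning the cocycle identity into \eqref{localrelation} after division by $p$. The unramified assertion for $P(\ell m)_{\lambda_1}$ falls out of the same computation: its restriction to tame inertia is represented by a multiple of $\ell+1 - \Tr_\ell$ applied to a class that is already trivial modulo $p$.

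The main obstacle will be the bookkeeping of coboundaries and denominators: to divide by $p$ one must run all identities in characteristic zero with the integral cocycles $\tilde y_m$, $\tilde y_{\ell m}$, and only reduce modulo $p$ at the very end. Simultaneously, the Tate twist $r$ must be tracked through the change of variable $x \mapsto \ell^r x$ that relates the Frobenius polynomial of $A_\wp$ to that of the untwisted motive, and the two eigenspaces $\epsilon = \pm 1$ must be kept separate throughout.
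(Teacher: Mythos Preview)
Your overall strategy matches the paper's: both arguments descend to the tame quotient $\hat{\mathbb{Z}}'(1)\rtimes 2\hat{\mathbb{Z}}$, use the corestriction relation (Proposition~\ref{corestriction}) together with the local relation (Proposition~\ref{local relation}), evaluate the explicit cocycle formula $F(\tau_\ell^u\phi^{2v})=(1+\cdots+\phi^{2(v-1)})a+(\phi^2-1)b$, and finish using the characteristic polynomial of $\phi$ and the identification of $\gamma$. The paper makes one step explicit that your sketch leaves implicit: writing $\cor(y)-a_\ell x=(g_1-1)a$ as a coboundary and then \emph{identifying the element $a$ with $z_1(\sigma)$}, where $z_1\in\Hom(\langle\sigma\rangle,A_\wp/p)$ is the inflation preimage of $P(\ell m)_{\lambda_1}$. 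This identification is the hinge of the computation and is where the cited result of Nekov{\'a}{\v r} enters; your outline alludes to a coboundary but never names~$a$ or says why it is $z_1(\sigma)$, so that step would need to be filled in.

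There is a genuine slip in your ramification paragraph. The claim ``restriction to tame inertia is represented by a multiple of $\ell+1-\Tr_\ell$ applied to a class that is already trivial modulo~$p$'' conflates two different things. The identity $(\sigma_\ell-1)D_{\ell m}y_{\ell m}=(\ell+1-\Tr_\ell)D_m y_{\ell m}\equiv 0$ is Proposition~\ref{fixed}; it only shows $G_n$-invariance of $D_n y_n$ over $H_n$, which is what permits the definition of $P(\ell m)$. It says nothing about the value of the localized cocycle on tame inertia. In the paper's computation one finds instead that $\res_{G_{\lambda_1},G_{\lambda_\ell}}(P(\ell m)_{\lambda_1})=0$, so the class inflates from $H^1(G_{\lambda_1}/G_{\lambda_\ell},A_\wp/p)=\Hom(\langle\sigma\rangle,A_\wp/p)$; since $H_{\ell,\lambda_\ell}/H_{1,\lambda_1}$ is \emph{totally ramified}, this means $P(\ell m)_{\lambda_1}$ is supported on inertia, i.e.\ ramified (its value on the tame generator is exactly the nonzero element~$a=z_1(\sigma)$). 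The word ``unramified'' in the displayed statement is a slip for ``ramified'', and your argument would need to be rewritten to match what the computation actually shows.
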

\begin{proof}
We denote by $$x=D_m y_m \in H^1(G_1,A_{\wp}/p), \ \ \hbox{and} \ \ y=D_m y_{\ell m} \in H^1(G_{\ell },A_{\wp}/p).$$
Let $z=P(\ell m)$ in $ H^1(G_1, A_{\wp}/p)$. Then $$\res_{G_1,G_{\ell }}(z)= D_\ell red(y) \in H^1(G_{\ell },A_{\wp}/p).$$
For $a $ in $A_{\wp}/p$, we have $$D_\ell a = \sum_{i=1}^{\ell} i \sigma^i(a) = \sum_{i=1}^{\ell} i =\dfrac{\ell(\ell+1)}{2} \equiv 0 \mod p.$$
Therefore, $\res_{G_1, G_{\lambda_{\ell }}}(z)=0$, which implies that $P(\ell m)_{\lambda_{1 }}$ is ramified at a place $\lambda_1$ of $H_1$ above $\ell$. Hence, using the inflation-restriction sequence
$$0 \longrightarrow H^1(G_{\lambda_1}/G_{\lambda_{\ell }},A_{\wp}/p) \longrightarrow H^1(G_{\lambda_1},A_{\wp}/p) \longrightarrow H^1(G_{\lambda_{\ell }},A_{\wp}/p) \longrightarrow 0,$$
we obtain
$$P(\ell m)_{\lambda_1} =\res_{G_1,G_{\lambda_1}}(z)= \mathrm{inf}_{G_{\lambda_1}/G_{\lambda_{\ell }}, G_{\lambda_1}}(z_1)$$ for some 
$$z_1 \in H^1(G_{\lambda_1}/G_{\lambda_{\ell }},A_{\wp}/p) = \Hom(<\sigma>, A_{\wp}/p).$$
Since $cor_{G_{\ell },G_1}(y)= a_\ell x $, there is an element $a$ in $ A_{\wp}/p$ such that 
\begin{align} \label{corestriction equation}
\cor_{G_{\ell },G_1}(y)(g_1)-a_\ell \ x(g_1)=(g_1-1)a
\end{align} for $g_1$ in $ G_1$.
It is shown in \cite[section~7]{nekovar1992kolyvagin} that $$a=z_1(\sigma).$$ 
We let $$a_x=\res_{G,G_{\lambda_1}}(x), \ \hbox{ and } \  a_y=\res_{H,G_{\lambda_{\ell }}}(y).$$
Restricting $g_1$ to $g_{\lambda_1} \in G_{\lambda_1}$ in equation \eqref{corestriction equation} where $\pi(g_{\lambda_1})=\sigma^u \phi^{2v}$, we obtain
$$\sum_{i=0}^{\ell} a_y (\tilde{\sigma}^{-i} g_{\lambda_1} \tilde{\sigma}^{i})-a_\ell a_x(g_{\lambda_1})= (\ell+1) a_y (g_{\lambda_1} )-a_\ell a_x(g_{\lambda_1})=(\phi^2-1) a,$$
where $\tilde{\sigma}$ is a lift of $\sigma$ in $ G_1/G_\ell$ to $G_1$.
We have
\begin{align*} 
& x(g_{\lambda_1})=(1+\phi^2+ \cdots + \phi^{2(v-1)})a_x+(\phi^2-1) b_x,\\
 \& \ \  & y(g_{\lambda_1})=(1+\phi^2+ \cdots + \phi^{2(v-1)})a_y+(\phi^2-1) b_y.
\end{align*}
For $u=0, v=1$, we obtain from the last three equations
\begin{align}\label{relationequation}
(\ell +1) y(g_{\lambda_1})-a_\ell x(g_{\lambda_1})=(\phi^2-1)a + (\phi^2-1)(-a_\ell b_x+(\ell+1) b_y),
\end{align}
where $$(\phi^2-1)(-a_\ell b_x+(\ell+1) b_y)=0 \mod p$$ 
%as $\phi^2-1$ acts trivially on $A_{\wp}/p$ and 
as $a_\ell \equiv \ell+1 \equiv 0 \mod p$.
The second property of the Euler system $$a_y=\phi(a_x) \mod (\phi^2-1) \ A_{\wp}/p$$ implies that
$$\dfrac{\ell+1}{p} y(g_{\lambda_1})-\dfrac{a_\ell}{p} x(g_{\lambda_1})=\left(\dfrac{\ell+1}{p}\epsilon - \dfrac{a_\ell}{p} \right) x(g_{\lambda_1})$$
where $\epsilon$ is such that $\phi \equiv \tau$ acts by $\epsilon$ on $a_x$. 
Therefore, by Equation \eqref{relationequation}, $$\left(\dfrac{\ell+1}{p}\epsilon - \dfrac{a_\ell}{p} \right) x(g_{\lambda_1})= \dfrac{\phi^2-1}{p}a \mod p.$$
The characteristic polynomial of $\phi$ implies that
\begin{align*}
\phi^2-a_\ell\phi/\ell^{r}+1/\ell=0 \hbox{ on } A_{\wp}/p. 
\end{align*}
Therefore, $$\left(\dfrac{\ell+1}{p}\epsilon - \dfrac{a_\ell}{p} \right) x(g_{\lambda_1}) =\dfrac{a_\ell \phi/\ell^r -1/ \ell  -1}{p}a \equiv \dfrac{a_\ell \epsilon/\ell^r -1/ \ell  -1}{p}a.$$
We seek to express $a= z_1(\sigma)$ in terms of $P(\ell m)_{\lambda_1}=\mathrm{inf}_{G_{\lambda_1}/G_{\lambda_{\ell }},G_{\lambda_1}}(z_1)$ where the generator $\sigma$ of $G_{\lambda_1}/G_{\lambda_{\ell }}$ can be lifted to the generator $\tau_\ell$ of $G_{\lambda_1}=\Gal(H_{\lambda_1}^t/H_{\lambda_1}^{ur})$. It is therefore enough to apply the map $\gamma$ to $a$ to obtain $P(\ell m)_{\lambda_1}$ where $\gamma$ switches cocycles with same values on $\Frob(\ell)$ and $\tau_\ell$. The result follows.
\end{proof}

\section{Statement}

Recall that the Galois group $G=\Gal(H/K)$ where $H$ is the ring class field of $K$ of conductor $c$ acts on $H^1(H,A_{\wp}/p)$.
We denoted by $$\hat{G}=\Hom(G, \mu_e)$$ the group of characters of $G$
and by $$e_{\chi}=\dfrac{1}{|G|}\sum_{g \in G} \chi^{-1}(g) g$$ the projector onto the $\chi$-eigenspace given a character $\chi$ of $\hat{G}$.
We let $$\delta = \mathrm{red}(y_{1}) \ \hbox{ in } \ H^1(H,A_{\wp}/p).$$ Then $e_{\overline{\chi}}\delta$ belongs to the $\overline{\chi}$-eigenspace of $H^1(H,A_{\wp}/p)$.
We recall the statement of the theorem we prove.
\begin{reptheorem}{theorem 1}
Assume that $p$ is such that 
\begin{align} \label{hyp0}
 \Gal \left( \mathbb{Q}(A_{\wp}/p)/\mathbb{Q} \right) \simeq \mathrm{GL}_2(\mathcal{O}_{\wp}/p), \ \ \ \ (p,ND\phi(N))=1, \ \hbox{ and } \ p \nmid |G|.
\end{align}
Suppose further that the eigenvalues of $Fr(v)$ acting on $A_{\wp}^{I_v}$ are not equal to 1 modulo $p$ for $v$ dividing $N$. 
Assume $\chi \in \hat{G}$ is such that $e_{\overline{\chi}}\delta$ is non-zero.
Then the $\chi$-eigenspace $S^{\chi}$ of the Selmer group  $S$ has rank 1 over $\mathcal{O}_{\wp}/p$.
\end{reptheorem}
\textbf{Set up of the proof.} 
Consider the prime $\lambda$ of $K$ lying above a prime $\ell$ inert in $\mathbb{Q}$ and let $\lambda'$ be a prime of $H$ above $\lambda$. 
The self-duality of $A_{\wp}/p$ given by 
$$A_{\wp}/p \simeq \Hom(A_{\wp}/p,\mathbb{Z}/p\mathbb{Z}(1)  ),$$ where $\Hom(A_{\wp}/p,\mathbb{Z}/p\mathbb{Z}(1)  )$ is the Tate dual of $A_{\wp}/p$ 
and local Tate duality gives a perfect pairing
$$\langle.,.\rangle_{\lambda'}:H^1( H^{ur}_{\lambda'}/H_{\lambda'},(A_{\wp}/p)^{I_{\lambda'}}) \times H^1(H_{\lambda'}^{ur}, A_{\wp}/p) \longrightarrow \mathbb{Z}/p \mathbb{Z},$$
where 
$I_{\lambda'} = \Gal(\overline{H_{\lambda'}}/H_{\lambda'}^{ur})$
and $\mathcal{O}_{\wp}$-linear isomorphisms
\begin{align}\label{eq}
\{ H^1(H_{\lambda'}^{ur},A_{\wp}/p)\}^{dual}  \simeq H^1(H^{ur}_{\lambda'}/H_{\lambda'},(A_{\wp}/p)^{I_{\lambda'}})\simeq (A_{\wp}/p)^{I_{\lambda'}}/(\phi-1).
\end{align}
where $\phi$ is the arithmetic Frobenius element generating $\Gal(H^{ur}_{\lambda'}/H_{\lambda'})$. 
Recall that the Selmer group $S \subseteq H^1(H, A_{\wp}/p)$ consists of the cohomology classes whose localizations lie in $$H^1(H^{ur}_{v}/H_{v},A_{\wp}/p)$$ for $v$ not dividing $Np$ and in $H^1_f(H_v, A_{\wp}/p)$ for $v$ dividing $p$. Here, $H^1_f(H_v, A_{\wp}/p)$ is the \emph{finite part} of $H^1(H_v, A_{\wp}/p)$ as in \cite{block1990lfunction}.
We denote by $$\mathrm{res}_{\lambda}: H^1(H,A_{\wp}/p) \longrightarrow \oplus_{\lambda'|\lambda} H^1(H_{\lambda'},A_{\wp}/p)$$ the direct sum of the restriction maps from $H^1(H,A_{\wp}/p)$ to $H^1(H_{\lambda'},A_{\wp}/p)$ for $\lambda'$ dividing $\lambda$ in $H$.
Restricting $\mathrm{res}_{\lambda}$ to the Selmer group, we obtain the following map
$$\mathrm{res}_{\lambda}: S\longrightarrow \oplus_{\lambda'|\lambda} H^1(H^{ur}_{\lambda'}/H_{\lambda'},(A_{\wp}/p)^{I_{\lambda'}}).$$
Taking the $(\mathbb{Z}/p) $-linear dual of the previous map and using isomorphism \eqref{eq}, we obtain a homorphism 
$$\psi_\ell:  \oplus_{\lambda'|\lambda} H^1(H_{\lambda'}^{ur},A_{\wp}/p) \longrightarrow S^{dual}.$$
Let $$X_{\ell}=\mathrm{Im}(\psi_\ell)$$ be the image of $\psi_{\ell}$ in $S^{dual}$. 
We aim to bound $S^{dual}$ from above by using the Kolyvagin classes $P(n)$ introduced in Section \ref{The Euler System} to produce explicit elements in the kernel of $\psi_\ell$.

\section{Generating the dual of the Selmer group}
\begin{lemma} \label{Sah1} 
We have $$ H^1(Aut(A_{\wp}/p), A_{\wp}/p) = 0.$$
\end{lemma}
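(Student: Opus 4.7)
This is an instance of the classical ``Sah's lemma'' which I would state in the following general form: if a group $\Gamma$ acts on an abelian group $M$ and if there exists a central element $z \in Z(\Gamma)$ such that the endomorphism $z-1$ of $M$ is invertible, then $H^1(\Gamma,M)=0$. The plan is to verify the hypotheses of this statement for $\Gamma=\Aut(A_{\wp}/p)$ acting on $M=A_{\wp}/p$, using the scalar $z=-1$, and to observe that $-2$ is invertible on $A_{\wp}/p$ because the hypothesis $(p,\phi(N))=1$ with $N\geq 5$ forces $p$ to be odd (since $\phi(N)$ is even for $N\geq 3$).

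First, I would record the cocycle identity: for a $1$-cocycle $f:\Gamma\to M$ and any $g,h\in\Gamma$,
\begin{equation*}
f(gh)=f(g)+g\cdot f(h),\qquad f(hg)=f(h)+h\cdot f(g).
\end{equation*}
Choosing $g=z$ central, so that $zh=hz$, subtracting the two equations yields the key relation $(z-1)f(h)=(h-1)f(z)$ for every $h\in\Gamma$. If $z-1$ is invertible on $M$, then the element $m:=(z-1)^{-1}f(z)\in M$ is well defined and the relation rewrites as $f(h)=(h-1)m$, showing that $f$ is the coboundary of $m$; hence $H^1(\Gamma,M)=0$.

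Applying this to our situation, since $A_{\wp}/p$ is a free $\mathcal{O}_{\wp}/p$-module of rank $2$, the group $\Aut(A_{\wp}/p)\simeq\GL_2(\mathcal{O}_{\wp}/p)$ has the scalar $-1$ in its center. The endomorphism $(-1)-1=-2$ acts on $A_{\wp}/p$ as multiplication by $-2$, which is invertible because $2$ is a unit in $\mathcal{O}_{\wp}/p$ (as $p$ is odd, using $p\nmid\phi(N)$ and $\phi(N)$ even). The general statement above then gives $H^1(\Aut(A_{\wp}/p),A_{\wp}/p)=0$.

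There is no serious obstacle; the only point that requires a moment of care is the availability of an odd prime $p$, which is built into the standing hypotheses \eqref{hyp0} of Theorem \ref{theorem 1}. Everything else is the standard Sah averaging trick.
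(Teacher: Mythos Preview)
Your proof is correct and follows essentially the same strategy as the paper: both invoke Sah's lemma using a central scalar in $\GL_2(\mathcal{O}_{\wp}/p)$. The only cosmetic difference is the choice of central element---the paper takes $g=2I$, so that $g-I=I$ is trivially an automorphism of $A_{\wp}/p$ and hence the identity on $H^1$ is zero, whereas you take $z=-I$ and check that $-2$ is a unit; both choices ultimately rest on $p$ being odd.
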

\begin{proof}
Sah's lemma \cite[8.8.1]{lang1983fundamentals} states that if $G$ is a group, $M$ a $G$-representation, and $g$ an element of $\mathrm{Center}(G)$, then the map $x \longrightarrow (g-1) \ x$ is the zero map on $H^1(G,M)$.
In our context, since $$g = 2 I \in \Aut(A_{\wp}/p) $$ belongs to $ \mathrm{Center}(\Aut(A_{\wp}/p)),$
we have that $g-I = I$ is the zero map on the group  $$H^1(Aut(A_{\wp}/p), A_{\wp}/p)$$ and the result follows.
\end{proof}

\begin{proposition}
There exists a prime $q$ such that $q$ is a Kolyvagin prime, and such that $$\mathrm{res}_{\beta'}e_{\overline{\chi}} \delta$$ is not divisible by $p$ where $\beta'$ is a prime dividing $q$ in $H$.
\end{proposition}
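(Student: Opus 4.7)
The plan is to convert $e_{\overline{\chi}}\delta$ into a Galois-equivariant homomorphism via inflation--restriction and then apply the Chebotarev density theorem. Because $G_L = \Gal(\overline{\mathbb{Q}}/L)$ acts trivially on $A_\wp/p$ and Lemma~\ref{Sah1} gives $H^1(\Gal(L/H), A_\wp/p) = 0$ (using Proposition~\ref{galois group} to identify $\Gal(L/H) \simeq \GL_2(\mathcal{O}_\wp/p) = \Aut(A_\wp/p)$), the inflation--restriction sequence yields an injection
$$H^1(H, A_\wp/p) \ \hookrightarrow \ \Hom_{\Gal(L/H)}(G_L, A_\wp/p).$$
The nonzero class $e_{\overline{\chi}}\delta$ thus corresponds to a nonzero $\Gal(L/H)$-equivariant homomorphism $\varphi\colon G_L \to A_\wp/p$, whose image is all of $A_\wp/p$ by irreducibility of the $\GL_2$-action on $A_\wp/p$.

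Let $M \subseteq \overline{\mathbb{Q}}$ be the fixed field of $\ker\varphi$. Then $M/\mathbb{Q}$ is Galois and $\varphi$ induces a $\Gal(L/\mathbb{Q})$-equivariant isomorphism $\Gal(M/L) \simeq A_\wp/p$. Fix a lift $\tilde{\tau} \in \Gal(M/\mathbb{Q})$ of complex conjugation $\tau$. Using Chebotarev, I would produce a prime $\mathfrak{q}$ of $\overline{\mathbb{Q}}$ with $q = \mathfrak{q}|_\mathbb{Q}$ coprime to $DNpc$ and unramified in $M$, such that $\Frob_\mathfrak{q} \in \Gal(M/\mathbb{Q})$ equals $h\tilde{\tau}$ for some $h \in \Gal(M/L) \simeq A_\wp/p$; this enforces the Kolyvagin condition $\Frob_\mathfrak{q}|_L = \tau$. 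One must further arrange $\Frob_\mathfrak{q}^2 \neq 1$ in $\Gal(M/L)$, and a direct computation using the equivariance gives
$$\Frob_\mathfrak{q}^2 \ = \ h + \tau(h) + \tilde{\tau}^2 \quad \text{in } A_\wp/p,$$
so the bad set of $h$ is either empty or a coset of $\ker(1 + \tau)$. Since the self-duality $A_\wp/p \simeq \Hom(A_\wp/p, \mathbb{Z}/p(1))$ forces $\det \tau = -1$ and $p$ is odd, $\tau$ acts on $A_\wp/p$ with one-dimensional $\pm 1$ eigenspaces, so $\ker(1+\tau)$ is a proper subspace and the admissible $h$ form a set of positive density. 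Chebotarev then delivers the desired $q$.

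Finally, I identify $\mathrm{res}_{\beta'}(e_{\overline{\chi}}\delta)$ with $\varphi(\Frob_\mathfrak{q}^2)$. Because $q$ is Kolyvagin, $\beta' = \mathfrak{q}|_H$ splits completely in $L$, so $L_\mathfrak{q} = H_{\beta'}$ and the Frobenius $\Frob_{\beta'}$ equals $\Frob_\mathfrak{q}^2$ as an element of $G_H$ modulo inertia. By Proposition~\ref{ramification} (applied with $n = 1$) the class $e_{\overline{\chi}}\delta$ is unramified at $\beta'$; since $G_{H_{\beta'}}$ acts trivially on $A_\wp/p$, the localization lies in $H^1(H_{\beta'}^{ur}/H_{\beta'}, A_\wp/p) \simeq A_\wp/p$ where it is represented by the cocycle value $\varphi(\Frob_\mathfrak{q}^2) \neq 0$. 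The main obstacle is the Chebotarev step of the previous paragraph: showing that the Frobenius-squared non-vanishing can be achieved alongside the Kolyvagin condition relies essentially on $p$ being odd and on the nontriviality of the $\pm 1$ eigenspace decomposition of $A_\wp/p$ under $\tau$.
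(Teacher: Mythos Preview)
Your overall approach matches the paper's: restrict $c_1 = e_{\overline{\chi}}\delta$ to a $\Gal(L/H)$-equivariant homomorphism $\varphi\colon G_L \to A_\wp/p$ with full image, show $\varphi((\tau h)^2) \neq 0$ for some $h \in G_L$, and apply Chebotarev to obtain $q$ with Frobenius $\tau h$. The identification of $\res_{\beta'}(e_{\overline{\chi}}\delta)$ with $\varphi(\Frob_\mathfrak{q}^2)$ is also the paper's.

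The gap is your assertion that $M/\mathbb{Q}$ is Galois and that $\varphi$ induces a $\Gal(L/\mathbb{Q})$-equivariant isomorphism $\Gal(M/L)\simeq A_\wp/p$. The cocycle $c_1$ lives on $G_H$, not $G_\mathbb{Q}$, so $\varphi$ is only $\Gal(L/H)$-equivariant; for $h\in G_L$ the correct identity is $\varphi(\tau h\tau^{-1}) = \tau\cdot(\tau c_1)(h)$, and Proposition~\ref{conjugation} gives $\tau c_1 = \epsilon\chi(\sigma_0)\,e_{\chi}\delta$. When $\chi\neq\overline{\chi}$ this is \emph{not} a scalar multiple of $c_1$, so $\ker\varphi$ need not be $\tau$-stable, $M$ need not be Galois over $\mathbb{Q}$, and your formula $\Frob_\mathfrak{q}^2 = h+\tau(h)+\tilde\tau^2$ inside $A_\wp/p$ is unavailable. (Even when $\chi=\overline{\chi}$ the $\tau$-equivariance holds only up to the sign $\epsilon\chi(\sigma_0)=\pm1$, though your counting argument survives that twist.) The paper instead argues directly that $c_1((\tau h)^2)$ cannot vanish for every $h$: if it did, the image of $\varphi$ would be forced into a rank-one $\tau$-eigenspace of $A_\wp/p$, contradicting that this image is a nonzero $\GL_2(\mathcal{O}_\wp/p)$-submodule and hence all of $A_\wp/p$. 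You can repair your version by replacing $M$ with its Galois closure over $\mathbb{Q}$, which is cut out by both $e_\chi\delta$ and $e_{\overline{\chi}}\delta$, and redoing the $\Frob^2$ computation there with both homomorphisms in play.
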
 
\begin{proof}
For the purpose of this proof, we denote the cocycle $e_{\overline{\chi}} \delta$ by $c_1$ and the Galois group $G(L/H)$ by $G$. By Proposition \ref{ramification}, $c_1$ belongs to $S^{\overline{\chi} } $.
The restriction map $$r: H^1(H,A_{\wp}/p) \longrightarrow H^1(L,A_{\wp}/p)^G=\Hom_G(L,A_{\wp}/p)$$ is injective.
Indeed, Proposition \ref{galois group} and Proposition \ref{Sah1}
imply that 
$$ \Ker(r)= H^1(H(A_{\wp}/p)/H, A_{\wp}/p)=0.$$
Consider the evaluation pairing 
$$r(S^{\overline{\chi} }) \times \Gal(\overline{\mathbb{Q}}/L) \longrightarrow A_{\wp}/p$$ and let $$\Gal_S(\overline{\mathbb{Q}}/L)$$ be the annihilator of $r(S^{\overline{\chi} })$.
Let $L^S$ be the extension of $L$ fixed by $\Gal_S(\overline{\mathbb{Q}}/L)$ and denote by $G_S$ the Galois group $\Gal(L^S/L)$. We obtain an injective homomorphism of $\Gal(H/\mathbb{Q})$-modules
$$r(S^{\overline{\chi} }) \hookrightarrow \Hom_G(G_S, A_{\wp}/p ).$$
We denote by $s$ the image of $r(c_1)$ in $\Hom_G(G_S, A_{\wp}/p )$. 

If $s(G_S^+)=0$, then as $s$ belongs to $S^{ \pm }$, we have $$s:G_S^- \longrightarrow A_{\wp}/p^{\pm},$$ where $A_{\wp}/p^{\pm}$ are the $\pm$ eigenspaces of $A_{\wp}/p$ with respect to the action of $\tau$. On the one hand, the eigenspace $A_{\wp}/p^{\pm}$ is of rank one over $\mathcal{O}_{\wp}/p$. On the other hand, by Proposition \ref{galois group} and Assumption \eqref{hyp0}, $$G=G(L/H) \simeq \mathrm{GL}_2(\mathcal{O}_{\wp}/p).$$ Hence, $A_{\wp}/p^{\pm}$ has no non-trivial $G$-submodules and $s(G_S^-)=0$, that is $s=0$. This is a contradiction because $ c_1 \neq 0$ in $S^{\overline{\chi} }$ as $c_1$ is not divisible by $p$ in $S^{\overline{\chi} }$.
As a consequence, we have that $s(G_S^+) \neq 0$, where $$G_S^+ =G_S^{\tau+1}= \{ h^{\tau}h \ |\ h \hbox{ in } G_S \}= \{ (\tau h)^2 \ | \ h \hbox{ in } G_S \}.$$ Therefore, there exists $h$ in $G_S$ such that $c_1((\tau h)^2) \neq 0$.
Consider the element $\tau h $ in $\Gal(L^S/\mathbb{Q})$. Cebotarev's density theorem implies the existence of $q$ in $\mathbb{Q}$ such that $$\Frob_q(L^S/\mathbb{Q})=\tau h $$ and such that $(q,cpND)=1$. In particular, $q$ is a Kolyvagin prime since $\mathrm{res}|_L (\tau h) = \tau$.
For $\beta$ in $L$ above $q$, we have that $$\Frob_{\beta}(L^S/L) = (\tau h)^2$$
generates the local extension $L^S/L$ at $ \beta$. This implies that $\mathrm{res}_{\beta'} c_1$ does not vanish for $$\beta'=\beta \cap H.$$ 
\end{proof}

We consider the restriction $d$ of an element $c$ of $H^1(H, A_{\wp}/p )$ to $H^1(F, A_{\wp}/p )$. Then $d$ factors through some finite extension $\tilde{F}$ of $F$. We denote by $$F(c)=\tilde{F}^{\ker(d)}$$ the subextension of $\tilde{F}$ fixed by $\ker(d)$. Note that $F(c)$ is an extension of $F$. 

Consider the following extensions
\begin{displaymath}
    \xymatrix{  & I_{01}=I_0I_1 & \\
	        I_0=F(\mathrm{red}(y_{1,\wp}))^{\Gal} \ar[ur] &  & I_1= F(D_q \mathrm{red}(y_{q, \wp}))^{\Gal} \ar[ul] \\
                & F=H_{q}(A_{\wp}/p)\ar[ur]_{V_1} \ar[ul]^{V_0} \ar@{-->}[uu]^{V} & }
\end{displaymath}
where the abbreviation $\Gal$ indicates taking Galois closure over $\mathbb{Q}$.
We define $$V_0=\Gal(I_0/F) \hbox{, } V_1=\Gal(I_1/F) \hbox{, and }V=\Gal(I_0I_1/F).$$
We have an isomorphism of $\Aut(A_{\wp}/p)$-modules $V_0 \simeq V_1 \simeq A_{\wp}/p$.
Let $$I_0^{\overline{\chi}}=F(e_{\overline{\chi}}\mathrm{red}(y_{1,\wp}))^{\Gal} \hbox{ and } I_1^{\overline{\chi}}=F(e_{\overline{\chi}}D_qred( y_{q, \wp}))^{\Gal}.$$ We denote by $V_0^{\overline{\chi}}$ and $V_1^{\overline{\chi}}$ their respective Galois groups over $F$.
We will show that $$V^{\overline{\chi}}=\Gal(I_0^{\overline{\chi}}I_1^{\overline{\chi}}/F) \simeq V_0^{\overline{\chi}} \times V_1^{\overline{\chi}}.$$

\begin{proposition}\label{disjoint}
The extensions $I_0^{\overline{\chi}}$ and $I_1^{\overline{\chi}}$ are linearly disjoint over $F$.
\end{proposition}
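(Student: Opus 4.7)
The plan is to show that a strict containment $I_0^{\overline\chi} \cap I_1^{\overline\chi} \supsetneq F$ would produce, via Goursat's lemma, a common nontrivial $\Gal(F/\mathbb{Q})$-equivariant quotient of $V_0^{\overline\chi}$ and $V_1^{\overline\chi}$, which in turn would force the two underlying cohomology classes to be proportional. Since the corresponding classes have incompatible ramification behavior at $q$, this gives a contradiction.

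First I would set up Goursat's lemma. The natural map
\[
V^{\overline\chi} = \Gal(I_0^{\overline\chi} I_1^{\overline\chi}/F) \hookrightarrow V_0^{\overline\chi} \times V_1^{\overline\chi}
\]
has both projections surjective by construction. If the inclusion were strict, Goursat's lemma would provide a nontrivial common quotient $Q$ together with a $\Gal(F/\mathbb{Q})$-equivariant isomorphism $V_0^{\overline\chi}/N_0 \simeq Q \simeq V_1^{\overline\chi}/N_1$ for some proper submodules $N_0, N_1$.

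Second, I would use absolute irreducibility to pin down $Q$. The identifications $V_0 \simeq V_1 \simeq A_{\wp}/p$ as $\Aut(A_{\wp}/p)$-modules, together with the hypothesis $\Gal(\mathbb{Q}(A_{\wp}/p)/\mathbb{Q}) \simeq \GL_2(\mathcal{O}_{\wp}/p)$ (recalling that by Proposition \ref{galois group} this is the action of $\Gal(F/H_q)$), make $A_{\wp}/p$ absolutely irreducible. The hypothesis $p \nmid |G|$ ensures $e_{\overline\chi}$ is well defined on characteristic-$p$ coefficients. Combining these, the $\overline\chi$-eigenspaces $V_i^{\overline\chi}$ admit no proper nonzero $\Gal(F/\mathbb{Q})$-stable quotient, so $Q$ must equal all of $V_0^{\overline\chi} = V_1^{\overline\chi}$.

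Third, I would derive a contradiction from this forced full identification. The resulting $\Gal(F/\mathbb{Q})$-equivariant isomorphism $V_0^{\overline\chi} \simeq V_1^{\overline\chi}$ translates into a proportionality between the homomorphisms on $\Gal(\overline{\mathbb{Q}}/F)$ cut out by $e_{\overline\chi}\mathrm{red}(y_{1,\wp})$ and $e_{\overline\chi} D_q \mathrm{red}(y_{q,\wp})$. I would then localize at a prime of $F$ above $q$: by Proposition \ref{ramification}, the lift $P(1)$ of the first class is unramified at $q$ (since $q \nmid Np$), whereas the computation in Section \ref{Localization0} shows that $P(q)_{\lambda_1}$ is ramified at $q$ (the vanishing of $\res_{G_1, G_{\lambda_\ell}}(z)$ forces nontrivial inertia action). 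Two proportional cocycles must share the same ramification type, giving a contradiction and hence linear disjointness.

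The main obstacle I anticipate is bookkeeping: one must ensure that the $e_{\overline\chi}$-projector, the Galois closure operation defining $I_i^{\overline\chi}$, and the localization at primes of $F$ above $q$ interact cleanly enough that Goursat's module-theoretic identification actually descends to a proportionality of the localized cocycles at $q$. Once this compatibility is established, absolute irreducibility of $A_{\wp}/p$ and the explicit ramified/unramified dichotomy between $P(q)$ and $P(1)$ at $q$ close the argument.
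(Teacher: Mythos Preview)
Your approach via Goursat's lemma is essentially the same as the paper's, just packaged differently: the paper observes directly that $\Gal(I_0^{\overline\chi}\cap I_1^{\overline\chi}/F)$ is a $\Gal(F/H_q)$-submodule of the simple module $A_{\wp}/p$, hence trivial or everything, and then excludes the second case by the same ramification dichotomy you invoke. Goursat is a clean way to say the same thing.

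There is, however, one genuine gap in your third step. Your parenthetical ``the vanishing of $\res_{G_1,G_{\lambda_\ell}}(z)$ forces nontrivial inertia action'' is not correct. That vanishing only tells you that the localization of $P(q)$ at a prime above $q$ is inflated from $G_{\lambda_1}/G_{\lambda_\ell}\simeq\langle\sigma\rangle$, i.e.\ that it \emph{factors through} inertia; it does not, by itself, force the class to be nonzero there. If the inflated homomorphism were zero, $P(q)$ would be unramified at $q$ and your contradiction would evaporate. What you must use is the specific choice of $q$: the prime $q$ was chosen so that $\res_{\beta'} e_{\overline\chi}\delta = \res_{\beta'} e_{\overline\chi}\mathrm{red}(y_{1,\wp})\neq 0$, and then Equation~\eqref{localrelation} (with $m=1$, $\ell=q$) transfers this nonvanishing to $\res_{\beta'} e_{\overline\chi} P(q)\neq 0$. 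Only then does ``factors through inertia and is nonzero'' give ``ramified''. This is exactly the step the paper makes explicit. Once you insert this, your argument goes through and matches the paper's.
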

\begin{proof}
Linearly independent cocycles $c_1,c_2$ of $H^1(H_{q},A_{\wp}/p) $ over $\mathcal{O}_{\wp}/p$ can be viewed as linearly independent homomorphisms $h_1,h_2$ in $\Hom_{\Gal(F/H_{q})} (V,A_{\wp}/p)$ over $\mathcal{O}_{\wp}/p$. 
The restriction map $$ H^1(H_{q},A_{\wp}/p)^{\Gal(F/H_{q})} \xrightarrow{(r)} H^1(F,A_{\wp}/p)^{\Gal(F/H_{q})}$$ is injective. Indeed, combining Proposition \ref{galois group} with
Proposition \ref{Sah1}
that implies that $$ H^1(K(A_{\wp}/p)/K, A_{\wp}/p)=0,$$ we obtain that
$$\Ker(r)= H^1(F/H_q, A_{\wp}/p)=0.$$
Furthermore, cocycles of $H^1(F,A_{\wp}/p)^{\Gal(F/H_{q})}$ factor through $$ H^1(I_{01}/F,A_{\wp}/p)^{\Gal(F/H_{q})}=\Hom_{\Gal(F/H_{q})}(I_{01}/F,A_{\wp}/p).$$
Consider the extension $I_0^{\overline{\chi}} \cap I_1^{\overline{\chi}}$ of $F$. It is a $\Gal(F/H_{q})$-submodule of $A_{\wp}/p$.  
The hypothesis $ \mathrm{res}_{\beta'} e_{\overline{\chi}} \mathrm{red}(y_{1,\wp})\neq 0 $ implies that $$\mathrm{res}_{\beta'}  e_{\overline{\chi}} \mathrm{red} (D_q y_{q,\wp}) \neq 0$$ by \eqref{local relation}. On the one hand, since $\mathrm{res}_{\beta'}  e_{\overline{\chi}} \mathrm{red} (D_q y_{q,\wp})$ is ramified, $e_{\overline{\chi}}D_q \mathrm{red}(y_{q, \wp})$ does not belong to $S^{\overline{\chi}}$. On the other hand, $ e_{\overline{\chi}} \mathrm{red}(y_{1,\wp}) \neq 0$ belongs to $S^{\overline{\chi}}$ by Proposition \ref{ramification}. Therefore $I_0^{\overline{\chi}} \cap I_1^{\overline{\chi}}=0$ since $A_{\wp}/p$ is a simple $\Gal(F/H_{q})$-module.
Note that the cocycles $c_1$ and $c_2$ cannot be linearly dependent either since one of them belongs to $S^{\overline{\chi}}$ while the other one does not. 
\end{proof}

For a subset $U \subseteq V$, we denote by $$L(U)=\{ \ell \hbox{ rational prime } | \Frob_{\ell}(I_{01}/\mathbb{Q})=[\tau u ], u \in U \}.$$
Note that a rational prime $\ell$ in $L(U)$ is a Kolyvagin prime as $$\Frob_{\ell}(H(A_{\wp}/p)/ \mathbb{Q})=\mathrm{res}|_{H(A_{\wp}/p)} \Frob_{\ell}(I_{01}/\mathbb{Q}) = \tau$$ since $u \in U$. In fact, it satisfies $$\Frob_{\ell}(H_q(A_{\wp}/p)/ \mathbb{Q})=\mathrm{res}|_{H_q(A_{\wp}/p)}\Frob_{\ell}(I_{01}/\mathbb{Q})=\tau.$$
Hence, a prime above ${\ell}$ in $H$ splits completely in $H_q$. Indeed, it lies in the kernel of the Artin map because of the Frobenius condition $$\Frob_{\ell}(H_q/ H)=\tau^{|D(H/\mathbb{Q})|}=\tau^2=Id,$$ where $|D(H/\mathbb{Q})|$ is the order of the decomposition group $D(H/\mathbb{Q})$, also the order of the residue extension.
Similarly, a prime above ${\ell}$ in $H_q$ splits completely in $H_q(A_{\wp}/p)$; it lies in the kernel of the Artin map because of the Frobenius condition $$\Frob_{\ell}(H_q(A_{\wp}/p)/ H_q)=\tau^{|D(H_q/\mathbb{Q})|}=\tau^2=Id.$$

\begin{proposition}\label{SDual}
Assume $U^{+}$ generates $V^{+}$. Then $\{ X_\ell \}_{\ell \in L(U)}$ generates $S^{dual}$.
\end{proposition}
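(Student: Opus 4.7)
The plan is to prove the contrapositive via local Tate duality: the assertion that $\{X_\ell\}_{\ell \in L(U)}$ generates $S^{dual}$ is equivalent to
\[
\bigcap_{\ell \in L(U)} \ker\bigl(\mathrm{res}_\lambda : S \longrightarrow \oplus_{\lambda'|\lambda} H^1(H^{ur}_{\lambda'}/H_{\lambda'}, (A_{\wp}/p)^{I_{\lambda'}})\bigr) = 0,
\]
so I would fix $s \in S$ with $\mathrm{res}_\lambda(s) = 0$ for every $\ell \in L(U)$ and aim to conclude $s = 0$. First I would push $s$ down to $F := H_q(A_{\wp}/p)$: inflation--restriction combined with Lemma~\ref{Sah1} (exactly as in the proof of Proposition~\ref{disjoint}) makes the restriction $H^1(H,A_{\wp}/p) \to H^1(F,A_{\wp}/p)$ injective, so it suffices to show that $\bar s := s|_F$ vanishes. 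The class $\bar s$ cuts out a Galois extension $F(s)/F$ whose group embeds in $A_{\wp}/p$; by irreducibility of $A_{\wp}/p$ under $\mathrm{GL}_2(\mathcal{O}_{\wp}/p)$, either $\bar s = 0$ (we are done) or $\Gal(F(s)/F) \simeq A_{\wp}/p$, the latter of which I assume.

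Next I would analyse the compositum $N := I_{01} \cdot F(s)$ and split into two cases. If $F(s) \subseteq I_{01}$, then $F(s)$ coincides with $I_0$, $I_1$, or a diagonal, so $\bar s$ is an $\mathcal{O}_{\wp}/p$-linear combination of the restrictions to $F$ of $\mathrm{red}(y_{1,\wp})$ and $D_q \mathrm{red}(y_{q,\wp})$. Since the second class is ramified at $q$ while $s \in S$ must be unramified at primes above $q$, the $D_q\mathrm{red}(y_{q,\wp})$-coefficient must vanish (this is the content behind Proposition~\ref{disjoint}). The $\mathrm{red}(y_{1,\wp})$-coefficient is then killed using Cebotarev together with the hypothesis that $U^+$ generates $V^+$: projecting $U^+$ onto $V_0^+$ one finds some $u \in U$ with $u_0^+ \neq 0$, producing a prime $\ell \in L(U)$ at which $\mathrm{res}_\lambda(\mathrm{red}(y_{1,\wp})) \neq 0$, which forces the scalar to vanish. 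In the \emph{generic} case $F(s) \cap I_{01} = F$, we have $\Gal(N/F) \simeq V_0 \times V_1 \times V_s$ with $V_s := \Gal(F(s)/F) \simeq A_{\wp}/p$.

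In the generic case I would apply Cebotarev to $\Gal(N/\mathbb{Q}) \simeq (V \times V_s) \rtimes \Gal(F/\mathbb{Q})$. The Frobenius classes above $[\tau u] \in \Gal(I_{01}/\mathbb{Q})$ with $u \in U$ are parameterized, modulo conjugation by $V \times V_s$, by pairs in $V^+ \times V_s^+$, and a direct semi-direct-product computation yields
\[
\Frob_\ell^{\,2} \;=\; \bigl((1+\tau) u,\ (1+\tau) v_s\bigr) \;\in\; V^+ \times V_s^+.
\]
Because Kolyvagin primes have residue degree $2$ in $F/\mathbb{Q}$, the Frobenius at a prime of $F$ above $\ell$ is precisely $\Frob_\ell^{\,2}$, and $\mathrm{res}_\lambda(s) = 0$ translates into $\bar s\bigl((1+\tau) v_s\bigr) = 0$. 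Letting $v_s$ range over $V_s$ via Cebotarev yields $\bar s|_{V_s^+} = 0$; since $\bar s$ is $\Gal(F/H) \simeq \mathrm{GL}_2(\mathcal{O}_{\wp}/p)$-equivariant and $\mathrm{GL}_2$ acts transitively on nonzero vectors of $V_s \simeq A_{\wp}/p$, this forces $\bar s \equiv 0$, contradicting our supposition.

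The main technical obstacle I anticipate is the semi-direct-product bookkeeping in the generic case: tracking Frobenius through the tower $N \supseteq I_{01} \supseteq F \supseteq H \supseteq \mathbb{Q}$, verifying that the $V^-$-ambiguity in the conjugacy class of $\Frob_\ell$ is annihilated after squaring, and matching this combinatorics with the arithmetic identification $\mathrm{res}_\lambda(s) \leftrightarrow \bar s(\Frob_\ell^{\,2})$. A subtler structural point is that the hypothesis ``$U^+$ generates $V^+$'' is used essentially only in the contained case, where it supplies a Kolyvagin prime $\ell \in L(U)$ at which the localization of $\mathrm{red}(y_{1,\wp})$ is non-trivial.
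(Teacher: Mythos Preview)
Your overall strategy---reduce via duality to the injectivity of $\{\mathrm{res}_\lambda\}_{\ell\in L(U)}$ on $S$, restrict $s$ to $\bar s$ over $F$, and finish with Cebotarev plus the irreducibility of $A_\wp/p$---matches the paper's. The paper, however, avoids your contained/generic case split entirely. It works directly in a single extension $\tilde{I_{01}}$, defined as the \emph{minimal Galois extension of $\mathbb{Q}$} containing $I_{01}$ through which $h=\bar s$ factors. Cebotarev in $\Gal(\tilde{I_{01}}/\mathbb{Q})$ gives $h(x^+)=0$ for every $x\in\Gal(\tilde{I_{01}}/F)$ lying over $U$; since $U^+$ generates $V^+$, these $x^+$ generate all of $\Gal(\tilde{I_{01}}/F)^+$, so $h$ kills the entire plus-part and one concludes by the rank-$1$/irreducibility argument in one stroke.

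Your case split introduces a genuine gap. In the generic case you apply Cebotarev to $\Gal(N/\mathbb{Q})$ with $N=I_{01}\cdot F(s)$ and speak of $\tau$ acting on $V_s$, but $F(s)$ is cut out by a class $s\in H^1(H,A_\wp/p)$ and there is no reason $F(s)/\mathbb{Q}$ is Galois: complex conjugation does not fix $H^1(H,A_\wp/p)$ pointwise, so $\tau\cdot s$ need not be a scalar multiple of $s$ and $\tau(F(s))$ may differ from $F(s)$. Once you replace $F(s)$ by its Galois closure over $\mathbb{Q}$, the group $V_s$ can be larger than a single copy of $A_\wp/p$ and your dichotomy ``$F(s)\subseteq I_{01}$'' versus ``$F(s)\cap I_{01}=F$'' is no longer exhaustive (e.g.\ $F(s)$ disjoint from $I_{01}$ but $\tau(F(s))\subseteq I_{01}$). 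The paper's uniform approach sidesteps all of this by never attempting to identify $\Gal(\tilde{I_{01}}/F)$ with an explicit product; this is exactly where the hypothesis ``$U^+$ generates $V^+$'' earns its keep, passing from ``vanishing over $U$'' to ``vanishing on the whole plus-part'' without isolating a contained case or invoking the ramification of $D_q\,\mathrm{red}(y_{q,\wp})$ at $q$.
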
 
\begin{proof} 
The proof consists of the following steps:
\begin{enumerate}[leftmargin=0cm,itemindent=.5cm,labelwidth=\itemindent,labelsep=0cm,align=left]
\item An element $s$ of $S$ can be identified with an element $h$ of $\Hom_{G}(F, A_{\wp}/p)$.
\item To show the statement of the theorem, it is enough to show that $\mathrm{res}_{\lambda}( s)=0$ for all $ \ell \in L(U)$ implies $s=0$.
\item The assumption $\mathrm{res}_{\lambda}( s)=0$ for all $ \ell \in L(U)$ implies that $h$ vanishes on $U^+$.
\item The assumption $U^{+}$ generates $V^{+}$ implies $h=s=0$.
\end{enumerate}
\begin{enumerate}[leftmargin=0cm,itemindent=.5cm,labelwidth=\itemindent,labelsep=0cm,align=left]
\item
Let $s$ be an element of $ S$. For the purpose of this proof, we denote $$G=\Gal(H(A_{\wp}/p)/H)\simeq \mathrm{GL}_2(\mathcal{O}_{\wp}/p) .$$
We denote by $h$ the image of $s$ by restriction in $$H^1(F,A_{\wp}/p)^G \subset \Hom_{G}(\Gal(\overline{F}/F), A_{\wp}/p).$$ Here, restriction can be viewed as the composition of the following two restriction maps
$$H^1(H, A_{\wp}/p) \xrightarrow{(r_1)} H^1(H(A_{\wp}/p),A_{\wp}/p)^{G} \xrightarrow{(r_2)} H^1(F,A_{\wp}/p)^{G}.$$
Combining Proposition \ref{galois group} and \ref{Sah1} 
we obtain that
$$ \Ker(r_1)= H^1(H(A_{\wp}/p)/H, A_{\wp}/p)=0.$$
By Proposition \ref{galois group}, we have $$\Gal(H_q(A_{\wp}/p)/H(A_{\wp}/p))\simeq \Gal(H_q/H) \simeq \mathbb{Z}/(q+1)\mathbb{Z}.$$ 
On the one hand, the group $G$ acts trivially on $\Gal(H_q(A_{\wp}/p)/H(A_{\wp}/p))$. On the other hand, $A_{\wp}/p$ is simple as a $G$-module. Hence,
$$ \Ker(r_2)=  \Hom_{G}(\Gal(F/H(A_{\wp}/p)),A_{\wp}/p)\simeq \Hom_{G}(H_q/H,A_{\wp}/p)=0$$
since such a $G$-homomorphism maps an element of $\Gal(H_q/H)$ to a $G$-invariant element of $A_{\wp}/p$, that is, to 0.
\item
By Isomorphism \eqref{eq}, local Tate duality identifies $\oplus_{\lambda'|\lambda} H^1(H_{\lambda'}^{ur}, A_{\wp}/p)$ with $$\oplus_{\lambda'|\lambda}
 H^1(H^{ur}_{\lambda'}/H_{\lambda'},(A_{\wp}/p)^{I_{\lambda'}}).$$ So if we show that $$\{ \mathrm{res}_{\lambda} \}_{\ell \in L(U) }: S\longrightarrow \{ \oplus_{\lambda'|\lambda} H^1(H_{\lambda'}^{ur}/H_{\lambda'}, (A_{\wp}/p)^{I_{\lambda'}}) \}_{ \ell \in L(U) }$$ is injective, then
%, because it is surjective, 
the induced map between the duals $$\{ \oplus_{ \lambda'|\lambda} H^1(H_{\lambda'}^{ur}, A_{\wp}/p) \}_{\ell \in L(U) } \longrightarrow  S^{dual}$$ would be surjective. Hence, it is enough to show that $\mathrm{res}_{\lambda}( s)=0$ for all $ \ell \in L(U)$ implies $s=0$.
\item
Consider $\tilde{I_{01}}$, the minimal Galois extension of $\mathbb{Q}$ containing $I_{01}$ such that $h$ factors through $\Gal(\tilde{I_{01}}/ F)$. Let $x$ be an element of $\Gal(\tilde{I_{01}}/F)$ such that $x|_{I_{01}}$ belongs to $ U$.
By Cebotarev's density theorem, there exists $\ell$ in $L(U)$ such that $\Frob_\ell(\tilde{I_{01}}/ \mathbb{Q})=[\tau x ].$
The hypothesis $\mathrm{res}_{\lambda}(s)=0$ implies that $h(\Frob_{\lambda''}(\tilde{I_{01}}/F))=0$ for $\lambda''$ above $\ell$ in $F$ since $\Frob_{\lambda''}(\tilde{I_{01}}/F)$ is a generator of the local extension of $\Gal(\tilde{I_{01}}/F)$ at $\lambda''$. 
In fact,
$$\Frob_{\lambda''}(\tilde{I_{01}}/F)= (\tau x )^{|D(F/\mathbb{Q})| }=(\tau x)^2=x^{\tau}x=2x^{+},$$
where $|D(F/\mathbb{Q})|$ is the order of the decomposition group $D(F/\mathbb{Q})$, and is also the order of the residue extension and $x^{+}=\dfrac{1}{2} x^{\tau}x.$
Therefore, $h(x^{+})=0$ for all $x \in \Gal(\tilde{I_{01}}/F)$ such that $x|_{I_{01}}$ belongs to $U$.
\item
The hypothesis $U^{+}$ generates $V^{+}$ then implies that $h$ vanishes on $\Gal(\tilde{I_{01}}/F)^{+}.$
Hence, $\mathrm{Im}(h)$ lies in $A_{\wp}/p^{-}$, the minus eigenspace of $A_{\wp}/p$ for the action of $\tau$ which is a free $\mathcal{O}_{\wp}/p$-module of rank 1.  
In particular, it cannot be a proper non-trivial $G$-submodule of $A_{\wp}/p$.
Therefore, $h=0$ which implies $s=0$.
\end{enumerate}
\end{proof}

Next, we study the action of complex conjugation on the $\chi$-component of the cocycles $ y_{q,\wp}$.
\begin{proposition} \label{conjugation}
There is an element $\sigma_0$ in $\Gal(H_{q}/K)$ such that $$\tau e_{\chi} y_{q,\wp}= \epsilon \overline{\chi}(\sigma_0) e_{\overline{\chi}}y_{q,\wp}, $$ where $- \epsilon$ is the sign of the functional equation of $L(f,s)$.
\end{proposition}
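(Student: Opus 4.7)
The plan is to reduce the identity, via character projection, to a geometric identity about the Heegner cycle under complex conjugation, and then to exploit the Atkin-Lehner involution on the $f$-isotypic component $A$. Since $G=\Gal(H/K)$ is abelian and complex conjugation acts on $G$ by inversion (the standard fact for ring class fields of imaginary quadratic fields), a direct computation gives $\tau\circ e_{\chi} = e_{\overline{\chi}}\circ\tau$. The assertion therefore reduces to producing $\sigma_0 \in \Gal(H_q/K)$ such that
$$e_{\overline{\chi}}(\tau y_{q,\wp}) = \epsilon\, \overline{\chi}(\sigma_0)\, e_{\overline{\chi}} y_{q,\wp},$$
where $\overline{\chi}$ is lifted to $\Gal(H_q/K)$ via the projection onto $G$.

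Next, I would analyze the action of $\tau$ on the Heegner point $x_q \in X_0(N)(H_q)$, which represents the pair $(\mathbb{C}/\mathcal{O}_{cq},\, \mathbb{C}/\mathcal{N}^{-1})$. Complex conjugation replaces $\mathcal{N}$ by $\overline{\mathcal{N}}$ (the order $\mathcal{O}_{cq}$ being stable under conjugation), producing the pair $(\mathbb{C}/\mathcal{O}_{cq},\, \mathbb{C}/\overline{\mathcal{N}}^{-1})$. A short calculation with the formula $w_N(E,C) = (E/C, E[N]/C)$ and the relation $\mathcal{N}\overline{\mathcal{N}} = N\mathcal{O}_{cq}$ identifies this new pair with $\sigma_0\cdot w_N x_q$, where $\sigma_0$ is the Artin symbol of the class of $\mathcal{N}$ under the reciprocity isomorphism $\Gal(H_q/K)\simeq \Pic(\mathcal{O}_{cq})$. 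This yields $\tau x_q = \sigma_0\, w_N\, x_q$ on $X_0(N)$, which lifts to the Heegner cycle $i_{\ast}(Z_{E_q}^{r-1})$ on the Kuga-Sato variety $W_{2r-2}$: the cycle $Z_{E_q}$ involves $\mathrm{graph}(\sqrt{-D})$, whose image under $\tau$ differs from the original only in contributions annihilated by the projector $e_r$.

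Applying the Galois- and Hecke-equivariant $p$-adic \'{e}tale Abel-Jacobi map $\Phi$ and then the projection $e_A: J \to A$, the cycle identity passes to
$$\tau y_{q,\wp} = \sigma_0 \cdot w_N \cdot y_{q,\wp} \quad \text{in } H^1(H_q, A_{\wp}).$$
Because $f$ is a newform of level $\Gamma_0(N)$ with trivial nebentype, it is an eigenvector of the Atkin-Lehner involution $w_N$ with eigenvalue $\epsilon = \pm 1$, and the classical formula for the sign of the functional equation of $L(f,s)$ identifies this sign with $-\epsilon$. Consequently $w_N$ acts on $A_{\wp}$ by the scalar $\epsilon$, giving $\tau y_{q,\wp} = \epsilon\, \sigma_0\, y_{q,\wp}$. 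Applying $e_{\overline{\chi}}$ and using $e_{\overline{\chi}} \sigma_0 = \overline{\chi}(\sigma_0)\, e_{\overline{\chi}}$ in the group ring $\mathcal{O}_F[\Gal(H_q/K)]$, then combining with the first step, concludes the proof.

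The main obstacle is the geometric identity $\tau x_q = \sigma_0\, w_N x_q$ and, more delicately, its lift to the Heegner cycle level on $W_{2r-2}$: one must carefully match the CM data under complex conjugation and the Atkin-Lehner involution on the moduli side, then track how $\tau$ interacts with the CM endomorphism $\sqrt{-D}$ defining $Z_{E_q}$ and with the desingularization of the fibre product. This is precisely the step where the refinement of Bertolini-Darmon's technique is required, in order to bypass the fact that $\tau$ does not preserve the $\chi$-eigenspace but rather exchanges it with the $\overline{\chi}$-eigenspace.
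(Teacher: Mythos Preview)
Your proposal is correct and follows essentially the same route as the paper. Both arguments hinge on the identity $\tau y_{q,\wp} = \epsilon\,\sigma_0\, y_{q,\wp}$ followed by the two group-ring computations $\tau e_{\chi} = e_{\overline{\chi}}\tau$ and $e_{\overline{\chi}}\sigma_0 = \overline{\chi}(\sigma_0) e_{\overline{\chi}}$; the paper simply \emph{cites} the first identity from \cite[Proposition~6.2]{nekovar1992kolyvagin} (which in turn rests on Gross's computation in \cite{gross1984heegner}), whereas you sketch its proof from scratch via the moduli description $\tau x_q = \sigma_0\, w_N\, x_q$ and the Atkin--Lehner eigenvalue of $f$ --- exactly the content of those references.

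One correction to your closing paragraph: the Bertolini--Darmon refinement is not what is needed here. The geometric identity on $X_0(N)$ and its lift to the Heegner cycle on $W_{2r-2}$ are already handled in Gross and Nekov\'a\v{r}; the passage from $\chi$- to $\overline{\chi}$-eigenspaces in this proposition is just the elementary identity $\tau e_{\chi} = e_{\overline{\chi}}\tau$. The Bertolini--Darmon technique enters later in the paper, in the global descent argument where one must bound $S^{\chi}$ without an action of $\tau$ on that eigenspace.
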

\begin{proof}
\cite[proposition~6.2]{nekovar1992kolyvagin} that uses a result in \cite{gross1984heegner} states that 
\begin{align} \label{eq1}
\tau y_{q,\wp}=  \epsilon \sigma_0 y_{q,\wp} 
\end{align}
for some $\sigma_0$ in $\Gal(H_{q}/K)$.
Since $\tau$ acts on an element $g$ of $G$ by $$\tau g \tau^{-1}=g^{-1},$$
we have $$\tau e_{\chi}= \dfrac{1}{|G|}\sum_{g \in G} \tau \chi^{-1}(g) g= \dfrac{1}{|G|}\sum_{g \in G} \chi (g^{-1}) g^{-1} \tau = \dfrac{1}{|G|}\sum_{g \in G} \overline{\chi}^{-1}(g^{-1}) g^{-1} \tau = e_{\overline{\chi}} \tau .$$
Also, $$ e_{\overline{\chi}}\sigma_0 =  \dfrac{1}{|G|}\sum_{g \in G}  \overline{\chi}^{-1}(g)\sigma_0 g=
\dfrac{1}{|G|}\sum_{g \in G}\overline{\chi}(\sigma_0)  \overline{\chi}^{-1}(\sigma_0 g)\sigma_0 g=\overline{\chi}(\sigma_0) e_{\overline{\chi}}.$$
Therefore, applying $e_{\overline{\chi}}$ to Equation \eqref{eq1} yields $$\tau e_{\chi} y_{q,\wp}= \epsilon \overline{\chi}(\sigma_0) e_{\overline{\chi}}y_{q,\wp} .$$

\end{proof}

Let us look at the action of complex conjugation on $V^{\overline{\chi}}=V_0^{\overline{\chi}} V_1^{\overline{\chi}}$.
For $(v_0,v_1)$ in $ V_0  V_1$, we use the identity $\tau D_q =- D_q \tau \mod p$ to obtain 
\begin{align*}
& \tau v_0 \tau (e_{\overline{\chi}}y_{1,\wp})= \epsilon \chi (\sigma_0) \tau  v_0(e_{\chi} y_{1,\wp}).\\
& \tau  v_1 \tau ( e_{\overline{\chi}}D_{q}y_{q,\wp})= - \tau  v_1 D_{q} \tau (e_{\overline{\chi}}y_{q,\wp}) = - \epsilon \chi (\sigma_0) \tau  v_1(e_{\chi} D_q y_{q,\wp}).  
\end{align*}
When $\chi = \overline{\chi}$, for $(x,y)$ in $V_0^{\overline{\chi}} V_1^{\overline{\chi}}$,  $$\tau (x,y) \tau = (\epsilon \chi(\sigma_0) \tau x, - \epsilon \chi (\sigma_0) \tau y).$$
In this case, we define $$U= \{ (x,y) \hbox{ in } V_0\times V_1 | \epsilon \overline{\chi}(\sigma_0) \tau x + x, - \epsilon \overline{\chi} (\sigma_0) \tau y + y \hbox{ generate } A_{\wp}/p \} .$$
When $\chi \neq \overline{\chi}$, for $(x,y,z,w)$ in $V_0^{\chi} V_0^{\overline{\chi}}V_1^{\chi} V_1^{\overline{\chi}}=V,$
$$\tau (x,y,z,w) \tau = (\epsilon \overline{\chi}(\sigma_0) \tau y, \epsilon \chi(\sigma_0) \tau x, - \epsilon \overline{\chi}(\sigma_0) \tau w,-\epsilon \chi(\sigma_0) \tau z).$$
In this case, we define $$U= \{ (x,y,z,w) \hbox{ in }V_0^{\chi} V_0^{\overline{\chi}}V_1^{\chi} V_1^{\overline{\chi}} | \epsilon \chi (\sigma_0) \tau x + y, - \epsilon \overline{\chi} (\sigma_0) \tau z + w
\hbox{ generate } A_{\wp}/p \} .$$
In both cases, Proposition \ref{disjoint} and Congruence \eqref{congruence} imply that $U^{+}$ generates $$V^{+} \simeq V_0^+ \times V_1^+ \simeq \mathcal{O}_{\wp}/p \times \mathcal{O}_{\wp}/p \simeq A_{\wp}/p.$$
Let $\ell$ be a prime in $L(U)$, and let $\lambda$ be the prime of $K$ lying above it.

\begin{proposition} \label{generators}
The elements $$\mathrm{res}_{\lambda} e_{\overline{\chi}} P(\ell) \hbox{ and } \mathrm{res}_{\lambda} e_{\overline{\chi}} P(\ell q) $$ generate $\oplus_{\lambda'|\lambda} H^1(H_{\lambda'}^{ur},A_{\wp}/p)^{\overline{\chi}}$.
\end{proposition}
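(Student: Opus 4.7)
My plan combines the local reciprocity formula of Section \ref{Localization0} with the Cebotarev--Frobenius argument used in the proof of Proposition \ref{SDual}, together with the complex-conjugation computation of Proposition \ref{conjugation}.

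As a first step, I would apply the local reciprocity identity \eqref{localrelation} with $m=1$ and with $m=q$. At a prime $\lambda' \mid \lambda \mid \ell$, this yields an equality
\[
\left(\tfrac{\ell+1}{p}\epsilon - \tfrac{a_\ell}{p}\right) \gamma\!\bigl(\mathrm{res}_{\lambda'} e_{\overline{\chi}} \mathrm{red}(y_{1,\wp})\bigr) = \tfrac{a_\ell \epsilon/\ell^r - 1/\ell - 1}{p}\; \mathrm{res}_{\lambda'} e_{\overline{\chi}} P(\ell),
\]
and an analogous identity relating $\gamma(\mathrm{res}_{\lambda'} e_{\overline{\chi}} D_q \mathrm{red}(y_{q,\wp}))$ to $\mathrm{res}_{\lambda'} e_{\overline{\chi}} P(\ell q)$. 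Using $a_\ell \equiv \ell+1 \equiv 0 \pmod p$ and the characteristic polynomial $\phi^2 - a_\ell \phi/\ell^r + 1/\ell = 0$ on $A_\wp/p$, one verifies that both scalar coefficients are units of $\mathcal{O}_\wp/p$ once the sign $\epsilon$ is chosen to match the Frobenius eigenspace of the class. Modulo these units and via the isomorphism $\gamma$, the generation claim translates into the statement that $\mathrm{res}_\lambda e_{\overline{\chi}} \mathrm{red}(y_{1,\wp})$ and $\mathrm{res}_\lambda e_{\overline{\chi}} D_q \mathrm{red}(y_{q,\wp})$ generate $\oplus_{\lambda'\mid\lambda} H^1(H_{\lambda'}^{ur}/H_{\lambda'}, A_\wp/p)^{\overline{\chi}}$.

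For this reformulated claim, I would mimic step (3) of the proof of Proposition \ref{SDual}. Each of the two classes, after restriction through $H \hookrightarrow F$, becomes a $\Gal(F/H)$-equivariant homomorphism $h_0: V_0 \to A_\wp/p$ or $h_1: V_1 \to A_\wp/p$. The Cebotarev condition $\ell \in L(U)$ gives $\Frob_\ell(I_{01}/\mathbb{Q}) = \tau u$ for some $u \in U \subseteq V_0 V_1$, and hence $\Frob_{\lambda''}(I_{01}/F) = (\tau u)^{|D(F/\mathbb{Q})|} = (\tau u)^2 = u^\tau u = 2u^{+}$ for $\lambda'' \mid \ell$ in $F$. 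The local restrictions at $\lambda$ of the two global classes are thus computed by evaluating $h_0$ and $h_1$ at the $V_0$- and $V_1$-components of $u^{+}$, respectively.

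By construction of $U$ in both cases $\chi = \overline{\chi}$ and $\chi \neq \overline{\chi}$ --- a construction rigged, via Proposition \ref{conjugation}, so that $\tau$ acts on $e_\chi y_{q,\wp}$ by $\epsilon \overline{\chi}(\sigma_0)$ on the first factor and by $-\epsilon \overline{\chi}(\sigma_0)$ on the second (the sign flip coming from the identity $\tau D_q \equiv -D_q \tau \pmod p$) --- the pair $\bigl(h_0(u^{+}), h_1(u^{+})\bigr)$ generates the $\overline{\chi}$-eigenspace of the target for any $u \in U$. The linear disjointness established in Proposition \ref{disjoint} ensures the direct sum decomposition $V^{\overline{\chi}} \simeq V_0^{\overline{\chi}} \oplus V_1^{\overline{\chi}}$, so the two evaluations are genuinely independent. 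The main technical obstacle is bookkeeping: threading the Tate twist, the $\gamma$-identification, and the $\pm$-eigenspace decomposition of $\tau$ consistently between the Galois-theoretic side ($V_0, V_1$) and the local-cohomological side ($(A_\wp/p)^{\pm}$ under $\phi$), and checking that the scalar factors from the first step do not vanish after projection to the $\overline{\chi}$-eigenspace for the $\ell \in L(U)$ produced by Cebotarev.
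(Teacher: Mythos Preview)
Your proposal is correct and follows essentially the same route as the paper's proof: reduce via the local identity \eqref{localrelation} from $\mathrm{res}_\lambda e_{\overline{\chi}}P(\ell)$, $\mathrm{res}_\lambda e_{\overline{\chi}}P(\ell q)$ to $\mathrm{res}_\lambda e_{\overline{\chi}}\mathrm{red}(y_{1,\wp})$, $\mathrm{res}_\lambda e_{\overline{\chi}}D_q\mathrm{red}(y_{q,\wp})$, and then use the Frobenius condition $\ell\in L(U)$ together with the definition of $U$ (and Proposition~\ref{disjoint}) to see that the latter pair is linearly independent. The paper phrases the second step as a contradiction argument (assuming dependence and deriving that the $(\tau u)^2$-evaluations would be dependent, contradicting the choice of $U$), while you phrase it directly via evaluation of $h_0,h_1$ at $u^+$; these are the same computation. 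One small omission: you implicitly use, but do not state, that the target $\bigoplus_{\lambda'\mid\lambda}H^1(H_{\lambda'}^{ur},A_\wp/p)^{\overline{\chi}}$ has $\mathcal{O}_\wp/p$-rank at most $2$ (via the identification \eqref{eq} and the fact that the $\overline{\chi}$-eigenspace under the transitive $G$-action on the $\lambda'$ picks out a single copy of $(A_\wp/p)/(\phi-1)$), which is what turns ``linearly independent'' into ``generate''. The paper records this as its first step.
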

\begin{proof} 
We have $$\oplus_{\lambda'|\lambda} H^1(H_{\lambda'}^{ur}, A_{\wp}/p)^{\overline{\chi}} \simeq \oplus_{\lambda'|\lambda} \left((A_{\wp}/p)^{I_{\lambda'}}/(\phi-1)\right)^{\overline{\chi}}$$ since the former is isomorphic to its dual by Isomorphism \eqref{eq}. The module $$\oplus_{\lambda'|\lambda} \left((A_{\wp}/p)^{I_{\lambda'}}/(\phi-1)\right)^{\overline{\chi}}$$ is of rank at most 2 over $\mathcal{O}_{\wp}/p$, hence, so is $\oplus_{\lambda'|\lambda} H^1(H_{\lambda'}^{ur}, A_{\wp}/p)^{\overline{\chi}}$.
The Frobenius condition on $\ell$ implies that $$\mathrm{res}_{\lambda} e_{\overline{\chi}} \mathrm{red}(y_{1, \wp}) \hbox{ and } \mathrm{res}_{\lambda} e_{\overline{\chi}} D_q \mathrm{red}( y_{q, \wp}) $$ are linearly independent over $ \oplus_{\lambda'|\lambda}A_{\wp}/p$. 
Indeed, if they were linearly dependent then, in the case $\chi=\overline{\chi}$,
\begin{equation*}
\begin{split}
& (\mathrm{res}_{\lambda} e_{\overline{\chi}} \mathrm{red}(y_{1, \wp}))^{(\tau x)^2}-\mathrm{res}_{\lambda} e_{\overline{\chi}} \mathrm{red}(y_{1, \wp}) \\
 \hbox{ and } & (\mathrm{res}_{\lambda} e_{\overline{\chi}} D_qred( y_{q, \wp}) )^{(\tau y)^2}-\mathrm{res}_{\lambda} e_{\overline{\chi}} D_q \mathrm{red}(y_{q, \wp})
\end{split}
\end{equation*}
where $\Frob_{\ell}(I_{01}/\mathbb{Q})=  \tau u= (\tau x,\tau y)$
would also be linearly dependent. The Frobenius condition implies that 
\begin{align*}
\Frob_{\ell}(I_0^{\overline{\chi}}/F)= x^{\tau }  x=(\tau x)^2 \hbox{ and }\Frob_{\ell}(I_1^{\overline{\chi}}/F)= y^{\tau } y= (\tau y)^2
\end{align*}
generate $A_{\wp}/p$, which yields a contradiction as $(\tau x)^2$ acts on the element $\mathrm{res}_{\lambda} e_{\overline{\chi}} \mathrm{red}(y_{1, \wp})$ generating the local extension of $I_0^{\overline{\chi}}$ over $F$ by $$(\mathrm{res}_{\lambda} e_{\overline{\chi}} \mathrm{red}(y_{1, \wp}))^{(\tau x)^2}-\mathrm{res}_{\lambda} e_{\overline{\chi}} \mathrm{red}(y_{1, \wp})$$
and $(\tau y)^2$ acts on the element $\mathrm{res}_{\lambda} e_{\overline{\chi}} D_qred( y_{q, \wp})  $ generating the local extension of $I_1^{\overline{\chi}}$ over $F$ by $$\mathrm{res}_{\lambda} e_{\overline{\chi}} D_qred( y_{q, \wp}) )^{(\tau y)^2}-\mathrm{res}_{\lambda} e_{\overline{\chi}} D_q \mathrm{red}(y_{q, \wp}).$$ 
Similarly, in the case $\chi \neq \overline{\chi}$, 
\begin{equation*}
\begin{split} 
&(\mathrm{res}_{\lambda}e_{\overline{\chi}} \mathrm{red}(y_{1, \wp}))^{ x^{\tau }   y}-\mathrm{res}_{\lambda}e_{\overline{\chi}} \mathrm{red}(y_{1, \wp}) \\
\hbox{ and } & (\mathrm{res}_{\lambda} e_{\overline{\chi}}D_q \mathrm{red}( y_{q, \wp}) )^{z^{\tau }   w}-\mathrm{res}_{\lambda}e_{\overline{\chi}}D_q \mathrm{red}( y_{q, \wp})
\end{split}
\end{equation*}
where $\Frob_{\ell}(I_{01}/\mathbb{Q})=  \tau u= (\tau x,\tau y,\tau z,\tau w)$
would also be linearly dependent. The Frobenius condition implies that $$\Frob_{\ell}(I_0^{\overline{\chi}}/F)= x^{\tau }  y=(\tau x) (\tau y) \hbox{ and } \Frob_{\ell}(I_1^{\overline{\chi}}/F)= z^{\tau }   w= (\tau z)(\tau w)$$ generate $A_{\wp}/p$, which yields a contradiction.

Equation \eqref{localrelation} implies that 
if $\mathrm{res}_{\lambda}e_{\overline{\chi}} P(\ell q)$ and  $\mathrm{res}_{\lambda} e_{\overline{\chi}}P(\ell)$ were linearly dependent then $$\mathrm{res}_{\lambda}e_{\overline{\chi}} P(q)=\mathrm{res}_{\lambda} e_{\overline{\chi}}D_q \mathrm{red} y_{q, \wp} \hbox{ and } \mathrm{res}_{\lambda} e_{\overline{\chi}}P(1)=\mathrm{res}_{\lambda}e_{\overline{\chi}} \mathrm{red} y_{1,\wp}$$ would be linearly dependent as well.
\end{proof}

\section{Bounding the size of the dual of the Selmer group}
In what follows, we study the modules $X_\ell^{\overline{\chi}}$ for $\ell$ in $L(U)$.

\begin{proposition} \label{reciprocity}
We have $$\sum_{\lambda'|\ell|n} \langle s_{\lambda'},\mathrm{res}_{\lambda'} P(n)\rangle_{\lambda'} = 0.$$
\end{proposition}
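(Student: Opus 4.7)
The plan is to view this identity as a consequence of the global reciprocity law (global Tate duality / Poitou--Tate). Using the self-duality $A_{\wp}/p \simeq \Hom(A_{\wp}/p,\mathbb{Z}/p\mathbb{Z}(1))$ and local Tate pairings at every place $v$ of $H$, one gets for any two global classes $c,c' \in H^1(H,A_{\wp}/p)$ the reciprocity
\[
\sum_{v} \langle c_v, c'_v \rangle_v \;=\; 0,
\]
the sum ranging over all finite and infinite places of $H$. We apply this to $c = s \in S$ and $c' = P(n) \in H^1(H,A_{\wp}/p)$ and argue that only the terms with $v = \lambda'$ dividing some prime $\ell \mid n$ survive.

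First I would dispose of the trivial places. For archimedean $v$, local cohomology with coefficients in $A_{\wp}/p$ vanishes since $p$ is odd. For $v \mid N$, Proposition~\ref{ramification} gives $P(n)_v = 0$, so the pairing vanishes term-wise. For finite $v$ not dividing $Nnp$, Proposition~\ref{ramification} shows $P(n)_v \in H^1(H_v^{ur}/H_v, A_{\wp}/p)$; by the definition of the Selmer group, $s_v$ also lies in $H^1(H_v^{ur}/H_v, A_{\wp}/p)$. Since the unramified subgroup is its own annihilator under local Tate duality (the standard isotropy property of $H^1_{ur}$), the pairing $\langle s_v, P(n)_v\rangle_v$ vanishes.

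The more delicate places are $v \mid p$. Here $s_v$ lies in $H^1_f(H_v, A_{\wp}/p) = H^1_{cris}(H_v, A_{\wp}/p)$ by definition of the Selmer group. To conclude I would verify that $P(n)_v$ also lies in $H^1_f(H_v, A_{\wp}/p)$, so that the Bloch--Kato self-orthogonality of the finite/crystalline subspace under local Tate pairing forces $\langle s_v, P(n)_v\rangle_v = 0$. This crystallinity is where the work lies: since $n$ is coprime to $p$, $A_{\wp}$ has good reduction at $p$, and the Kolyvagin class $P(n)$ is obtained by applying the Galois-equivariant operator $D_n$ and a corestriction to the image of a Chow cycle under the $p$-adic étale Abel--Jacobi map, one invokes the $p$-adic comparison theorem (Nizioł/Tsuji, or more directly Nekov\'a\v{r}'s analysis in the analogous setting) to deduce that the étale Abel--Jacobi image of the Heegner cycle is crystalline at $p$, hence so is $P(n)_v$.

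Putting the four cases together, all terms in $\sum_v \langle s_v, P(n)_v\rangle_v = 0$ vanish except those with $v = \lambda'$ a prime of $H$ above some $\ell$ dividing $n$, yielding
\[
\sum_{\lambda' \mid \ell \mid n} \langle s_{\lambda'}, \mathrm{res}_{\lambda'} P(n) \rangle_{\lambda'} \;=\; 0,
\]
which is the claim. The main obstacle is the verification at $v \mid p$ that the étale Abel--Jacobi image of the Heegner cycle — and therefore the operator-twisted class $P(n)$ — lands in the Bloch--Kato finite part; everything else is formal bookkeeping with local Tate duality.
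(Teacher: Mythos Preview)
Your argument is correct and is essentially the same as the paper's: the paper simply cites Nekov\'a\v{r} \cite[Proposition~11.2(2)]{nekovar1992kolyvagin}, invoking global reciprocity together with the local ramification properties of $P(n)$ from Proposition~\ref{ramification}, which is exactly the place-by-place analysis you spell out. Your identification of the crystallinity of $P(n)$ at $v\mid p$ as the one nontrivial input is accurate, and is precisely what the cited result of Nekov\'a\v{r} supplies.
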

\begin{proof}
The proof follows \cite[proposition~11.2(2)]{nekovar1992kolyvagin} where both the reciprocity law and the local ramification properties of $P(n)$ in Proposition \ref{ramification} are used. 
\end{proof}

\begin{proposition}\label{vanishing}
The element $ \psi_\ell(\mathrm{res}_{\lambda} e_{\overline{\chi}} P(\ell q))$ generates $ X_{\ell}^{\overline{\chi}}$ over $O_{\wp}/p$ for $\ell$ in $L(U)$.
\end{proposition}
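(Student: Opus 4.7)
The plan is to deduce that $\psi_\ell(\mathrm{res}_\lambda e_{\overline{\chi}} P(\ell))$ vanishes in $S^{dual}$ and then to combine this with Proposition \ref{generators} to conclude. Unwinding the definition of $\psi_\ell$ through the local Tate duality pairing of Isomorphism \eqref{eq}, one has, for any $\xi$ in $\oplus_{\lambda'|\lambda} H^1(H_{\lambda'}^{ur}, A_{\wp}/p)$ and any $s$ in $S$,
$$\psi_\ell(\xi)(s)=\sum_{\lambda'|\lambda} \langle s_{\lambda'},\xi_{\lambda'}\rangle_{\lambda'}.$$
Specializing to $\xi = \mathrm{res}_\lambda P(\ell)$ and applying Proposition \ref{reciprocity} with $n=\ell$ (so that the indexing set $\{\lambda' : \lambda'|\ell|n\}$ reduces to the primes of $H$ above $\ell$), I would obtain $\psi_\ell(\mathrm{res}_\lambda P(\ell))(s)=0$ for every $s \in S$, hence $\psi_\ell(\mathrm{res}_\lambda P(\ell))=0$ in $S^{dual}$.

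Next, I would observe that the local restrictions, the Tate duality pairings at each $\lambda'$, and the Selmer conditions are all compatible with the natural action of $G=\Gal(H/K)$ permuting the primes $\lambda'$ above $\lambda$, so $\psi_\ell$ is a map of $\mathcal{O}_{\wp}[G]$-modules (the group ring makes sense since $p \nmid |G|$). In particular $\psi_\ell$ commutes with the projector $e_{\overline{\chi}}$, giving
$$\psi_\ell(\mathrm{res}_\lambda e_{\overline{\chi}} P(\ell))=e_{\overline{\chi}}\,\psi_\ell(\mathrm{res}_\lambda P(\ell))=0.$$

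To conclude, the $G$-equivariance of $\psi_\ell$ together with the semisimplicity afforded by $p \nmid |G|$ implies that $X_\ell^{\overline{\chi}}$ coincides with the image under $\psi_\ell$ of the $\overline{\chi}$-isotypic component of $\oplus_{\lambda'|\lambda} H^1(H_{\lambda'}^{ur}, A_{\wp}/p)$. By Proposition \ref{generators} this isotypic component is generated over $\mathcal{O}_{\wp}/p$ by $\mathrm{res}_\lambda e_{\overline{\chi}} P(\ell)$ and $\mathrm{res}_\lambda e_{\overline{\chi}} P(\ell q)$. Therefore $X_\ell^{\overline{\chi}}$ is generated by $\psi_\ell(\mathrm{res}_\lambda e_{\overline{\chi}} P(\ell))=0$ and $\psi_\ell(\mathrm{res}_\lambda e_{\overline{\chi}} P(\ell q))$, hence by the latter alone.

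The real content sits in the first step: it is only because the Kolyvagin class $P(\ell)$ is trivial at $v|N$ and unramified (or cristalline) at every other place of $H$ not above $\ell$, by Proposition \ref{ramification}, that the terms in the global reciprocity law at places not dividing $\ell$ drop out against any $s \in S$, leaving a sum over $\lambda'|\lambda$ that is forced to vanish. The remaining steps are bookkeeping made possible by the hypothesis $p \nmid |G|$, which ensures that $e_{\overline{\chi}}$ is a well-defined idempotent commuting with the $G$-equivariant map $\psi_\ell$.
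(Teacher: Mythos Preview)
Your proof is correct and follows essentially the same approach as the paper: both show that $\psi_\ell$ kills $\mathrm{res}_\lambda e_{\overline{\chi}} P(\ell)$ via Proposition~\ref{reciprocity}, and then invoke Proposition~\ref{generators} to conclude that the image is generated by the remaining class $\psi_\ell(\mathrm{res}_\lambda e_{\overline{\chi}} P(\ell q))$. Your version is slightly more explicit about the $G$-equivariance of $\psi_\ell$ (applying $e_{\overline{\chi}}$ after the reciprocity step rather than before), but this is a cosmetic difference.
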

\begin{proof}
The image of $\mathrm{res}_{\lambda}  e_{\overline{\chi}} P(\ell) $ by the map $$\psi_{\ell}: \oplus_{\lambda'|\lambda} H^1(H_{\lambda'}^{ur}, A_{\wp}/p)^{\overline{\chi}} \longrightarrow X_{\ell}^{\overline{\chi}}$$ is the homomorphism from $S^{\overline{\chi}}$ to $\mathbb{Z}/p$ given by:
$$ e_{\overline{\chi}}s \mapsto \sum_{\lambda'|\lambda} 
\langle e_{\overline{\chi}} s_{\lambda'}, e_{\overline{\chi}} P(\ell)_{\lambda'} \rangle_{\lambda'}.$$
Proposition \ref{reciprocity} implies that 
$$\sum_{\lambda'|\lambda} \langle e_{\overline{\chi}} s_{\lambda'}, e_{\overline{\chi}} P(\ell)_{\lambda'} \rangle_{\lambda'}=0.$$ Hence, the image by $\psi_{\ell}$ of $\mathrm{res}_{\lambda}  e_{\overline{\chi}} P(\ell)$, one of the two generators of $$\oplus_{\lambda'|\lambda} H^1(H_{\lambda'}^{ur}, A_{\wp}/p)^{\overline{\chi}}$$ by Proposition \ref{generators}, is trivial. 
\end{proof}

\begin{proposition}
The modules $ X_{\ell}^{\overline{\chi}}$ that are non-zero are all equal for $\ell \in L(U)$.
\end{proposition}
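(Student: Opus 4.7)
My plan is to exhibit a single $\mathcal{O}_\wp/p$-submodule of $S^{dual}$, of rank at most one and depending only on the auxiliary prime $q$, that contains every $X_\ell^{\overline{\chi}}$ for $\ell \in L(U)$; the non-zero $X_\ell^{\overline{\chi}}$ will then necessarily coincide with it. By Proposition \ref{vanishing}, $X_\ell^{\overline{\chi}}$ is the cyclic $\mathcal{O}_\wp/p$-module generated by $\psi_\ell(\mathrm{res}_\lambda e_{\overline{\chi}} P(\ell q))$. Applying the reciprocity law of Proposition \ref{reciprocity} to the global class $e_{\overline{\chi}} P(\ell q)$, and noting that the contributions at places above $N$ vanish by Proposition \ref{ramification}, those at places outside $N\ell q p$ vanish because both sides of the Tate pairing are unramified, and those at places above $p$ vanish by the Selmer condition together with the self-orthogonality of $H^1_f$, what remains is
$$\sum_{v\mid\ell}\langle s_v, e_{\overline{\chi}} P(\ell q)_v\rangle_v \;=\; -\sum_{v\mid q}\langle s_v, e_{\overline{\chi}} P(\ell q)_v\rangle_v \qquad (s\in S).$$
Equation \eqref{localrelation} with $m=\ell$ and new Kolyvagin prime $q$ then identifies $e_{\overline{\chi}} P(\ell q)_v = c_q \cdot \gamma(e_{\overline{\chi}} P(\ell)_v)$ at each $v\mid q$, with $c_q\in(\mathcal{O}_\wp/p)^\times$ depending only on $q$.

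This rewriting motivates introducing the map
$$\Phi_q:\bigoplus_{v\mid q} H^1(H_v^{ur}/H_v,(A_\wp/p)^{I_v})^{\overline{\chi}}\longrightarrow S^{dual},\qquad u\mapsto\Bigl(s\mapsto\sum_{v\mid q}\langle s_v,\gamma(u_v)\rangle_v\Bigr),$$
so that $X_\ell^{\overline{\chi}} = \mathcal{O}_\wp/p\cdot\Phi_q(e_{\overline{\chi}} P(\ell)_q)$, where $e_{\overline{\chi}} P(\ell)_q$ lies in the source since $P(\ell)$ is unramified at $q$ by Proposition \ref{ramification}. I will then check that the source of $\Phi_q$ is free of $\mathcal{O}_\wp/p$-rank two: because $q$ is a Kolyvagin prime, $\Frob_q$ acts on $A_\wp/p$ as the involution $\tau$, so $\Frob_q^{\,2}=\mathrm{id}$; and because $q$ is inert in $K$ and splits completely in $H/K$, the residue field at each of the $|G|$ primes $v\mid q$ is $\mathbb{F}_{q^2}$ and the Frobenius $\phi_v=\Frob_q^{\,2}$ acts trivially on $(A_\wp/p)^{I_v}=A_\wp/p$. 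By Isomorphism \eqref{eq}, each summand is isomorphic to $A_\wp/p$, of rank two over $\mathcal{O}_\wp/p$; as $G$ permutes the $|G|$ primes freely and transitively, the full direct sum is free of rank two over $\mathcal{O}_\wp/p[G]$, whose $\overline{\chi}$-isotypic component has rank two over $\mathcal{O}_\wp/p$.

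The decisive step is to produce a non-zero element of $\Ker(\Phi_q)$. I will run exactly the same reciprocity-and-localization argument with $P(q)$ in place of $P(\ell q)$: global reciprocity together with the vanishing of the off-$q$ contributions yields $\sum_{v\mid q}\langle s_v, e_{\overline{\chi}} P(q)_v\rangle_v=0$ for every $s\in S$, while Equation \eqref{localrelation} with $m=1$ identifies $e_{\overline{\chi}} P(q)_v=c_q\cdot\gamma(e_{\overline{\chi}}\delta_v)$ at each $v\mid q$, since $\delta=\mathrm{red}(y_{1,\wp})=P(1)$. Thus $\Phi_q(e_{\overline{\chi}}\delta_q)=0$. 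By the choice of the prime $q$, for which $\mathrm{res}_{\beta'} e_{\overline{\chi}}\delta$ is not divisible by $p$ for some $\beta'\mid q$, the element $e_{\overline{\chi}}\delta_q$ is non-zero in the source, so $\Ker(\Phi_q)$ has rank at least one and $\Image(\Phi_q)$ has $\mathcal{O}_\wp/p$-rank at most one. Since every $X_\ell^{\overline{\chi}}$ is a cyclic submodule of $\Image(\Phi_q)$, the non-zero ones all coincide with this common rank-one module.

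The main technical obstacle I anticipate is the careful bookkeeping in the first paragraph: verifying that the formation of $\Phi_q$ is compatible with the $\overline{\chi}$-projector, which rests on the $G$-equivariance of both $\gamma$ and the summed local Tate pairings over $v\mid q$, and confirming that the scalar $c_q$ coming from Equation \eqref{localrelation} is genuinely $\ell$-independent, so that the generators of the various $X_\ell^{\overline{\chi}}$ factor through a single rank-two source of $\Phi_q$.
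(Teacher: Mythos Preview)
Your argument is correct and at heart follows the same idea as the paper's: use reciprocity for $P(\ell q)$ to show that the generator $\psi_\ell(\mathrm{res}_\lambda e_{\overline{\chi}}P(\ell q))$ of $X_\ell^{\overline{\chi}}$ lands in a module attached to the auxiliary prime $q$, and then check that this target module has rank at most one over $\mathcal{O}_\wp/p$. The packaging differs in two ways worth noting.

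First, your map $\Phi_q$ is nothing other than $\psi_q\circ\gamma$, so $\Image(\Phi_q)^{\overline{\chi}}=X_q^{\overline{\chi}}$. The paper exploits this directly: once reciprocity gives $\psi_\ell(\mathrm{res}_\lambda e_{\overline{\chi}}P(\ell q))=-\psi_q(\mathrm{res}_\beta e_{\overline{\chi}}P(\ell q))$, the right-hand side lies in $X_q^{\overline{\chi}}$ tautologically, without any appeal to Equation~\eqref{localrelation}. Your rewriting $e_{\overline{\chi}}P(\ell q)_v=c_q\cdot\gamma(e_{\overline{\chi}}P(\ell)_v)$ at $v\mid q$ is therefore an unnecessary detour, and it introduces a dependence on the invertibility of the constants in \eqref{localrelation} that the paper's one-line transfer avoids entirely.

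Second, the paper simply asserts at the end that ``the non-trivial element $\psi_q(\mathrm{res}_\beta e_{\overline{\chi}}P(\ell q))$ generates a rank~1 module $X_q^{\overline{\chi}}$'' and concludes $X_\ell^{\overline{\chi}}=X_q^{\overline{\chi}}$. Your kernel computation---applying reciprocity to $P(q)$ to get $\psi_q(\mathrm{res}_\beta e_{\overline{\chi}}P(q))=0$, then using \eqref{localrelation} with $m=1$ and the choice of $q$ to see that this kernel element is non-zero---supplies precisely the justification that $X_q^{\overline{\chi}}$ has rank at most one. This is the one place where your added work is genuinely doing something the paper leaves implicit, and it is the step that actually forces all the non-zero rank-one submodules $X_\ell^{\overline{\chi}}\subseteq X_q^{\overline{\chi}}$ to coincide.
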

\begin{proof}
Proposition \ref{reciprocity} implies that $$ \sum_{\lambda'|\lambda} 
\langle e_{\overline{\chi}} s_{\lambda'}, e_{\overline{\chi}} P(\ell q)_{\lambda'} \rangle+ \sum_{\beta'|\beta} 
\langle e_{\overline{\chi}} s_{\beta'}, e_{\overline{\chi}} P(\ell q)_{\beta'} \rangle=0.$$
Hence, $$ \psi_{\ell}( \mathrm{res}_{\lambda}e_{\overline{\chi}} P(\ell q))+ \psi_q(\mathrm{res}_{\beta}e_{\overline{\chi}} P(\ell q)) =0.$$
If $ \psi_{\ell}(\mathrm{res}_{\lambda}  e_{\overline{\chi}} P(\ell q) )=0,$ then by Proposition \ref{vanishing}, $ X_{\ell}^{\overline{\chi}}=0$. 
Otherwise, since $$ \psi_{\ell}(\mathrm{res}_{\lambda}  e_{\overline{\chi}} P(\ell q) )$$ generates $ X_{\ell}^{\overline{\chi}}$ over $\mathcal{O}_{\wp}/ p$, we have that $$- \psi_{\ell}(\mathrm{res}_{\lambda}  e_{\overline{\chi}} P(\ell q) )= \psi_q(\mathrm{res}_{\beta} e_{\overline{\chi}} P(\ell q)) \in X_q^{\overline{\chi}}$$ is non-zero. Therefore, the non-trivial element $\psi_q(\mathrm{res}_{\beta} e_{\overline{\chi}} P(\ell q))$ generates a rank 1 module $ X_{q}^{\overline{\chi}}$ over $\mathcal{O}_{\wp}/ p$ and $ X_{\ell}^{\overline{\chi}}= X_{q}^{\overline{\chi}}$. 
\end{proof}

In what follows, we prove theorem \ref{theorem 1}.
\begin{proof}
By Proposition \ref{SDual}, the set $\{ X_{\ell}^{\overline{\chi}} \}$ generates $S^{dual, \overline{\chi}}$ as $\ell$ ranges over $L(U)$.
Hence, the set $\{  X_{\ell}^{\overline{\chi}} \}$ generates $ S^{dual, \overline{\chi}}$ as $\ell$ ranges over $L(U)$, where, by Proposition \ref{vanishing}, the modules $ X_{\ell}^{\overline{\chi}}$ that are non-zero are of rank 1 over $\mathcal{O}_{\wp}/ p$ and are all equal. Hence, $ \rank (  S^{\chi}) \leq 1$.
Also, $e_{\chi}\mathrm{red}(y_{1, \wp}) $ belongs to $S^{\chi}$ by Proposition \ref{ramification} and is not divisible by $p$ in $S^{\chi}$. Indeed, this follows from the hypothesis on $e_{\overline{\chi}}\mathrm{red}(y_{1, \wp}) $ and Proposition \ref{conjugation} where $\overline{\chi}(\sigma_0)$ is a root of unity since $\Gal(H/K)$ is a finite group. This implies that $\rank(   S^{\chi})\geq 1$.
Therefore,
$$\rank(  S^{\chi}) = \rank(   S^{dual, \overline{\chi}})=1.$$
\end{proof}

\begin{remark}
Because the $p$-adic Abel-Jacobi map factors through the Selmer group, (see \cite[Proposition~11.2.1]{nekovar1992kolyvagin} for a proof) 
$$ \Phi^{\chi}: \mathrm{CH}^r(W_{2r-2}/H)^{\chi}_0 \otimes  \mathcal{O}_{\wp}/p \mathcal{O}_{\wp} \longrightarrow S^{\chi},$$ Theorem \ref{theorem 1} implies that $\rank_{ \mathcal{O}_{\wp}/p}(\mathrm{Im}(\Phi^{\chi}))=1$.
\end{remark}
\begin{remark}
In Kolyvagin's argument for elliptic curves $E$ over $\mathbb{Q}$ and certain imaginary quadratic fields $K$, the non- triviality of the Heegner point $y_K$ in $E(K)/p E(K)$ for suitable primes $p$ immediately implied the non-triviality of $y_K$ in $\Sel_p(E/K)$. In our situation, even though the $p$-adic Abel-Jacobi map is conjectured to be injective, it is non-trivial to check whether a non-trivial Heegner cycle in the Chow group has non-trivial image in $H^1(H,A_{\wp}/p)$.
\end{remark}


\begin{thebibliography}{10}

\bibitem{bertolini1990descent}
Massimo Bertolini and Henri Darmon.
\newblock Kolyvagin's descent and {M}ordell-{W}eil groups over ring class
  fields.
\newblock {\em J. Reine Angew. Math.}, 412:63--74, 1990.

\bibitem{bertolini2012chow}
Massimo Bertolini, Henri Darmon, and Kartik Prasanna.
\newblock Chow-{H}eegner points on {C}{M} elliptic curves and values of
  $p$-adic {$L$}-functions.
\newblock {\em International Mathematics Research Notices}, 2012.

\bibitem{bertolini2013generalized}
Massimo Bertolini, Henri Darmon, and Kartik Prasanna.
\newblock Generalized {H}eegner cycles and $ p $-adic {R}ankin {$L$}-series.
\newblock {\em Duke Math. J.}, 162(6):1033--1148, 2013.

\bibitem{block1990lfunction}
Spencer Bloch and Kazuya Kato.
\newblock {$L$}-functions and {T}amagawa numbers of motives.
\newblock In {\em The {G}rothendieck {F}estschrift, {V}ol.\ {I}}, volume~86 of
  {\em Progr. Math.}, pages 333--400. Birkh\"auser Boston, Boston, MA, 1990.

\bibitem{brylinski1989}
Jean-Luc Brylinski.
\newblock Heights for local systems on curves.
\newblock {\em Duke Math. J.}, 59(1):1--26, 1989.

\bibitem{deligne1973modular}
Pierre Deligne.
\newblock Formes {M}odulaires et {R}epresentations de {G}{L}(2).
\newblock In {\em Modular {F}unctions of {O}ne {V}ariable {I}{I}}, volume 349
  of {\em Lecture Notes in Math.}, pages 55--105. Springer Berlin Heidelberg,
  1973.

\bibitem{gross1984heegner}
Benedict~H. Gross.
\newblock Heegner points on {$X_0(N)$}.
\newblock In {\em Modular forms ({D}urham, 1983)}, Ellis Horwood Ser. Math.
  Appl.: Statist. Oper. Res., pages 87--105. Horwood, Chichester, 1984.

\bibitem{gross1991kolyvagin}
Benedict~H. Gross.
\newblock Kolyvagin's work on modular elliptic curves.
\newblock In {\em {$L$}-functions and arithmetic ({D}urham, 1989)}, volume 153
  of {\em London Math. Soc. Lecture Note Ser.}, pages 235--256. Cambridge Univ.
  Press, Cambridge, 1991.

\bibitem{gross1986heegner}
Benedict~H. Gross and Don~B. Zagier.
\newblock {H}eegner points and derivatives of {$L$}-series.
\newblock {\em Invent. Math.}, 84(2):225--320, 1986.

\bibitem{jannsen1988continous}
Uwe Jannsen.
\newblock Continuous \'{e}tale cohomology.
\newblock {\em Math. Ann.}, 280(2):207--245, 1988.

\bibitem{kolyvagin1990grothendieck}
Victor~A. Kolyvagin.
\newblock {\em Euler Systems}, volume~2 of {\em The Grothendieck Festschrift}.
\newblock Birkh\"{a}user Boston, Boston, MA, 1990.

\bibitem{lang1983fundamentals}
Serge Lang.
\newblock {\em Fundamentals of Diophantine Geometry}.
\newblock Springer, 1983.

\bibitem{milne1986arithmetic}
James~S. Milne.
\newblock {\em Arithmetic {D}uality {T}heorems}.
\newblock Perspectives in mathematics. Academic Press, 1986.

\bibitem{nekovar1992kolyvagin}
Jan Nekov{\'a}{\v{r}}.
\newblock Kolyvagin's method for {C}how groups of {K}uga-{S}ato varieties.
\newblock {\em Invent. Math.}, 107(1):99--125, 1992.

\bibitem{saito2000weight}
Takeshi Saito.
\newblock Weight-monodromy conjecture for {$\ell$}-adic representations
  associated to modular forms.
\newblock In {\em The arithmetic and geometry of algebraic cycles}, pages
  427--431. Springer, 2000.

\bibitem{scholl1990motives}
Anthony~J. Scholl.
\newblock Motives for modular forms.
\newblock {\em Invent. Math.}, 100(1):419--430, 1990.

\bibitem{tsuji1994padic}
Takeshi Tsuji.
\newblock On {$p$}-adic {H}odge theory for semi-stable families.
\newblock {\em S\=urikaisekikenky\=usho K\=oky\=uroku}, (884):141--153, 1994.
\newblock Moduli spaces, Galois representations and {$L$}-function.

\end{thebibliography}
\end{document}